\theoremstyle{plain}
\newtheorem*{main}{Theorem}
\newtheorem{theorem}{Theorem}[section]
\newtheorem{lemma}[theorem]{Lemma}
\newtheorem{proposition}[theorem]{Proposition}
\newtheorem{corollary}[theorem]{Corollary}
\newtheorem{problem}[theorem]{Problem}
\theoremstyle{remark}
\newtheorem*{acknowledgment}{Acknowledgment}
\numberwithin{equation}{section}
\newcommand{\seclabel}[1]{\label{sec:#1}}   
\newcommand{\thmlabel}[1]{\label{thm:#1}}   
\newcommand{\lemlabel}[1]{\label{lem:#1}}   
\newcommand{\corlabel}[1]{\label{cor:#1}}   
\newcommand{\prplabel}[1]{\label{prp:#1}}   
\newcommand{\eqnlabel}[1]{\label{eqn:#1}}   
\newcommand{\secref}[1]{\ref{sec:#1}}   
\newcommand{\thmref}[1]{\ref{thm:#1}}   
\newcommand{\lemref}[1]{\ref{lem:#1}}   
\newcommand{\prpref}[1]{\ref{prp:#1}}   
\newcommand{\eqnref}[1]{\eqref{eqn:#1}} 
\newcommand{\by}[1]{\overset{\eqnref{#1}}=}  
\newcommand{\byx}[1]{\overset{#1}=}          
\newcommand{\ld}{\backslash}                    
\newcommand{\rd}{/}                             
\title{Axioms for Unary Semigroups Via Division Operations}
\author{Jo\~{a}o Ara\'{u}jo}
\author{Michael Kinyon}
\address[Ara\'{u}jo]{Centro de \'{A}lgebra \\
Universidade de Lisboa \\
1649-003 Lisboa \\ Portugal}
\email{\url{jaraujo@ptmat.fc.ul.pt}}
\address[Kinyon]{Department of Mathematics \\
University of Denver \\ 2360 S Gaylord St \\ Denver, Colorado 80208 USA}
\email{\url{mkinyon@math.du.edu}}
\begin{document}

\maketitle

\begin{center}
{\em In memory of Takayuki Tamura (1919-2009)}
\end{center}

\begin{abstract}
When a semigroup has a unary operation, it is possible to define two binary operations, namely, left and right division. In addition it
is well known that groups can be defined in terms of those two divisions. The aim of this paper is to extend those results to other classes of unary semigroups. In the first part of the paper we provide characterizations for several classes of unary semigroups, including (a special class of) E-inversive, regular, completely regular, inverse, Clifford, etc., in terms of left and right division. In the second part we solve a problem that was posed elsewhere. The paper closes with a list of open problems.
\end{abstract}

\section{Introduction}

A unary semigroup $(S,\cdot,{}')$ is a semigroup $(S,\cdot)$ together with an
additional unary operation ${}':S\to S$. Naturally linked to any unary semigroup
are two binary operations, called \emph{left} and \emph{right divisions}, respectively,
and defined by:
\begin{equation}
\eqnlabel{ldrd}
x\ld y = x' y \qquad\text{and}\qquad x\rd y = xy' \,.
\end{equation}
This defines a bimagma (a set with two binary operations) $(S,\ld,\rd)$.
We will refer to $(S,\ld,\rd)$,
with $\ld$ and $\rd$ defined by \eqnref{ldrd}, as the \emph{division bimagma}
of the unary semigroup $(S,\cdot,{}')$. Thus \eqnref{ldrd} defines a functor
$(S,\cdot,{}') \rightsquigarrow (S,\ld,\rd)$ from unary semigroups to their
division bimagmas.

One of the goals of defining a class of algebras in terms of a class of algebras of a different type is that many properties/problems become obvious in the new setting, while difficult to spot in the first. This type of research, very popular some years ago, is now attracting renewed interest as assistance from computational tools allows attacks on problems that not long ago seemed difficult if not impossible. 
Therefore the main theme of this paper is to characterize various classes of unary semigroups
in terms of their division bimagmas. There are many such characterizations for
groups, \emph{e.g.}, \cite{kimura}. Tamura \cite{Tamura} seems to have been the first
to find characterizations for more general classes of unary semigroups, specifically, regular involuted
and inverse semigroups in terms of their division bimagmas. Tamura's work was followed
up in \cite{AM}.

This paper has two main parts. In \S\secref{new}, we offer new characterizations
of several classes of unary semigroups. In \S\secref{Tamura},
we address one of the problems raised in \cite{AM}.

The division bimagmas of all unary semigroups we consider in this paper will
satisfy
\begin{align*}
(x \ld y)\rd z &= x\ld (y\rd z) \tag{B1} \\
x\rd y' &= x'\ld y\,. \tag{B2}
\end{align*}
In fact, (B1) clearly holds in any unary semigroup; it is just a consequence
of associativity and \eqnref{ldrd}. Properties (B1) and (B2) allow us to
reconstruct the unary operation ${}'$ and the semigroup
multiplication $\cdot$ by
\begin{equation}
\eqnlabel{reconstruct}
x' = (x\ld x)\rd x = x\ld (x\rd x)
\qquad \text{and} \qquad
x\cdot y = x\rd y' = x'\ld y\,.
\end{equation}
Here we think of (B1) as implying that $x'$ is well-defined in \eqnref{reconstruct}.
In (B2), we view $x'$ as a shorthand for $(x\ld x)\rd x = x\ld (x\rd x)$. Thus we
do not consider ${}'$ to be part of the signature of a bimagma $(S,\ld,\rd)$.

It is easy to see that starting with a bimagma $(S,\ld,\rd)$ satisfying
(B1), (B2), and defining ${}'$ and $\cdot$ by \eqnref{reconstruct}, we obtain a
unary semigroup $(S,\cdot,{}')$. Indeed,
$(xy)z = (x'\ld y)\rd z' \byx{(B1)} x'\ld (y\rd z') = x(yz)$.
Thus we have a functor $(S,\ld,\rd) \rightsquigarrow (S,\cdot,{}')$ from
bimagmas satisfying (B1) and (B2) to unary semigroups.

We now put some of the main results of the first part of this paper together into the following
summary. More precise statements will be given in the next section, where we will also
recall the definitions of the various classes of unary semigroups.

\begin{main}
\thmlabel{main}
In the following table, the class of unary semigroups $(S,\cdot,{}')$ on the left is
characterized in terms
of their division bimagmas $(S,\ld,\rd)$ by the identities on the right.
\begin{center}
\begin{tabular}{l|l}
$(S,\cdot,{}')$ &   $(S,\ld,\rd)$ \\
\hline\hline
$E$-inversive, \eqnref{tech} &
(B1), $x'\ld y' = x\rd y$, $x'\rd y' = x\ld y$ \\
\hline
regular, $x'' = x$ & (B1), (B2), $x'\ld (x\ld x) = x$ \\
\hline
regular involuted & (B1), (B2), $x \rd (x \ld x)  = x$,
$(x\rd y)' = y\rd x$ \\
\hline
regular involuted, $(x(xx)'x)' = x(xx)'x$ &
(B1), (B2), $x \rd (x \ld x)  = x$,
$(x\rd y)' = y\rd x$,\\
& \qquad $(x\rd (y\ld x))\rd (y\ld y) = ((x\rd (y\ld x))\rd y)\ld y$ \\
\hline
inverse & (B1), (B2), $x\rd (x\ld x) = x$, \\
& \qquad $(x\ld x)\rd (y\rd y) = (y\rd y)\rd (x\ld x)$ \\
\hline
completely regular & (B1), (B2), $x\rd (x\ld x) = x$,
$x\rd x = x\ld x$\\
\hline
Clifford & (B1), (B2), $x\rd (x\ld x) = x$, $x\rd x = x\ld x$ \\
& \qquad $(x\ld x)\rd (y\rd y) = (y\rd y)\rd (x\ld x)$\,. \\
\hline
\end{tabular}
\end{center}
\noindent In each case, the functors
$(S,\cdot,{}') \rightsquigarrow (S,\ld,\rd)$ given by \eqnref{ldrd} and
$(S,\ld,\rd) \rightsquigarrow (S,\cdot,{}')$ given by
\eqnref{reconstruct} are mutually inverse.

Further, in each case, the identities on the right side of the table are independent.
\end{main}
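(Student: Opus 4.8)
The seven rows of the table are independent assertions, so the plan is to treat each row separately and, within a row, to establish in turn: (i) every unary semigroup in the stated class, with $\ld,\rd$ defined by \eqnref{ldrd}, satisfies the listed bimagma identities; (ii) conversely, every bimagma $(S,\ld,\rd)$ satisfying the listed identities becomes, via \eqnref{reconstruct}, a unary semigroup in the class; (iii) the two functors restrict to mutually inverse functors between the class and the bimagma variety; and finally (iv) the listed identities are independent. Step (i) is the routine direction: one rewrites each bimagma identity through \eqnref{ldrd} and checks it in the class. For instance $x'\ld y' = x\rd y$ becomes $x''y' = xy'$, $x\rd(x\ld x)=x$ becomes $x(x'x)'=x$, $(x\rd y)'=y\rd x$ becomes $(xy')'=yx'$, and each resulting semigroup identity holds in the relevant class essentially by definition (using the definitions of the classes recalled in \S\secref{new}).

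Steps (ii) and (iii) carry the real weight and I would prove them together. Given a bimagma satisfying a row's identities, \eqnref{reconstruct} already yields a semigroup, by the argument in the Introduction whenever (B1) is among the identities; for the $E$-inversive row, where (B1) is the only structural axiom listed, I would first derive (B2) from $x'\ld y'=x\rd y$ and $x'\rd y'=x\ld y$ and then apply the same argument. One must then check, arguing purely in the signature $\{\ld,\rd\}$: (a) that the round trip bimagma $\to$ unary semigroup $\to$ bimagma is the identity, which unwinds to the identities $u'\ld v'=u\rd v$ and $u'\rd v'=u\ld v$ with $u'$ read as $(u\ld u)\rd u$ --- immediate for the $E$-inversive row and, otherwise, to be derived from (B1), (B2) and the remaining identities, usually after first establishing $x''=x$ for the reconstructed ${}'$; and (b) that the reconstructed unary semigroup lies in the class, i.e.\ satisfies the defining identities of \S\secref{new}. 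Conversely, the round trip unary semigroup $\to$ bimagma $\to$ unary semigroup sends $\cdot$ to $x\mathbin{\widehat\cdot}y = x\rd y' = xy''$ and ${}'$ to $\widehat{x'}=(x\ld x)\rd x = x'xx'$, so for this to be the identity one needs each class on the left to satisfy $x''=x$ (or at least $xy''=xy$) and $x'xx'=x'$; in the involuted and ``$x''=x$'' rows this is built in, and in the inverse, completely regular, and Clifford rows it holds because ${}'$ is there the genuine (semigroup, or subgroup) inverse. For the two involuted rows I would isolate a lemma that, under (B1) and (B2), the identity $(x\rd y)'=y\rd x$ is equivalent to ${}'$ being an involutory antiautomorphism of $(S,\cdot)$; with that lemma in hand the extra identity of the fourth row translates to the fixed-point condition $(x(xx)'x)'=x(xx)'x$, matching the class on the left.

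The main obstacle I expect is precisely the bimagma-level equational reasoning in (ii)--(iii): since ${}'$ is not part of the signature and the associativity of $\widehat\cdot$ must be re-derived from (B1) rather than assumed, every rewrite is constrained to $\ld$, $\rd$, and the few listed identities, and one must confirm that each list is not merely necessary but sufficient both to force class membership and to make the two round trips collapse. I anticipate that these derivations are best found and verified with an automated theorem prover (Prover9), with the paper then presenting a human-readable reconstruction.

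For step (iv), independence, the plan is the standard model-theoretic one: for each row and each identity in its list, produce a finite bimagma --- located with a model finder such as Mace4 --- satisfying all the other identities of that row, including the background (B1) and (B2), while failing the chosen identity. Assembling these small models (one per identity per row) into a table finishes the proof. This step is mechanical; the only care is to minimize the sizes of the models and to machine-verify that each satisfies exactly the intended set of identities.
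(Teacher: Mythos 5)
Your plan is correct and follows essentially the same route as the paper: each row is handled separately by proving both directions and the round-trip identities in the bimagma signature (with (B2) derived from the two ``complementation'' identities in the $E$-inversive row, and $x''=x$ established first in the other rows), and independence is shown by exhibiting finite countermodels found with \textsc{Mace4}. One small repair: your proposed lemma that under (B1) and (B2) alone the identity $(x\rd y)'=y\rd x$ is equivalent to ${}'$ being an involutory antiautomorphism is false as stated --- the paper's two-element model with $x\rd y\equiv b$, $a\ld a=a$, $a\ld b=b$, $b\ld x=b$ satisfies (B1), (B2) and $(x\rd y)'=y\rd x$ while $a''=b\neq a$ --- so the equivalence must be formulated with (B3) among the hypotheses, which is harmless since (B3) is present in both rows where you would invoke it.
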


The second part of this paper follows more directly the work of Tamura.
Tamura was motivated by work of Kimura and Sen \cite{kimura}, who showed
that (in our terminology) groups are precisely those unary semigroups whose
division bimagmas satisfy the following two identities:
\begin{equation}
\eqnlabel{kimura-sen}
(x\rd y)\ld z = y\rd (z\ld x) \qquad\text{and}\qquad (x\rd y)\ld x = y\,.
\end{equation}

Tamura \cite{Tamura}
characterized regular involuted unary semigroups and inverse
semigroups in a similar way. The identities he considered are the following:
\begin{align*}
(x \rd y) \ld z &= y \rd (z \ld x) & \text{(T1)} &&
(x \rd x) \ld x &= x               & \text{(T2)} \\
x\rd y' &= x'\ld y & \text{(T3)} &&
(x\rd y)' &= y\rd x & \text{(T4)} \\
(x\rd x)\rd (y\rd y) &= (y\rd y)\rd (x\rd x)\,. & \text{(T5)} && &
\end{align*}
(Note that (T3) is just (B2).)
Tamura's characterizations are as follows (\cite{Tamura}, Theorem 3.1).

\begin{proposition}
\prplabel{tamura1}
\begin{enumerate}[label=\emph{\arabic*)}]
\item Let $(S,\cdot,{}')$ be a regular involuted unary semigroup. Then the
division bimagma $(S,\ld,\rd)$ satisfies (T1)--(T4).
Conversely, let $(S,\ld,\rd)$ be a bimagma satisfying (T1)--(T4).
Then $(S,\cdot,{}')$, with ${}'$ and $\cdot$ defined by \eqnref{reconstruct},
is a regular involuted unary semigroup.
\item Let $(S,\cdot,{}')$ be an inverse semigroup. Then the
division bimagma $(S,\ld,\rd)$ satisfies the identities (T1)--(T5).
Conversely, let $(S,\ld,\rd)$ be a bimagma satisfying (T1)--(T5).
Then $(S,\cdot,{}')$, with ${}'$ and $\cdot$ defined by \eqnref{reconstruct},
is an inverse semigroup.
\end{enumerate}
\end{proposition}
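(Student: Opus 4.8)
The forward directions of both parts are routine substitute-and-simplify arguments. Replacing $x\ld y$ by $x'y$ and $x\rd y$ by $xy'$ turns each of (T1)--(T5) into an identity that follows immediately from $x'' = x$, $(xy)' = y'x'$ and $xx'x = x$ (and, for (T5), from the commutativity of idempotents in an inverse semigroup); for instance (T1) becomes $(xy')'z = y(z'x)'$, both sides being $yx'z$. So the substance of the proposition lies in the two converse directions, and the crux is that (T1)--(T4) force (B1).

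For the converse of part 1), starting from a bimagma $(S,\ld,\rd)$ satisfying (T1)--(T4), I would set $x' := (x\ld x)\rd x$ and $x\cdot y := x'\ld y$ as in \eqnref{reconstruct}; the target is to verify the hypotheses of the observation from the introduction that any bimagma satisfying (B1) and (B2) yields a unary semigroup. Since (B2) is literally (T3), the work is establishing (B1). First, (T1) with $y=z=x$ combined with (T2) gives $x = x\rd(x\ld x)$, so every element of $S$ is a $\rd$-product, and two applications of (T4) then yield $x'' = x$ for all $x$. Next, (T3) together with $x''=x$ gives the duality identities $x\ld y = x'\rd y'$ and $x\ld y' = x'\rd y$. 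The key move is to substitute $w\mapsto w'$ in (T1): rewriting both sides via the duality identities and (T4) turns $(u\rd v)\ld w' = v\rd(w'\ld u)$ into $(v\rd u)\rd w = v\rd(w\rd u')$, and this last identity, after rewriting the two sides of (B1) by the same duality identities and (T4), is exactly (B1). Once (B1) holds, the introduction's observation gives that $(S,\cdot,{}')$ is a unary semigroup with $xy = x\rd y' = x'\ld y$. It then remains to check that ${}'$ is an involutory anti-automorphism --- $(xy)' = (x\rd y')' = y'\rd x = y'\rd(x')' = y'x'$, using (T4) and $x''=x$ --- and that $S$ is regular: $xx' = x\rd x$, so $xx'x = (x\rd x)\ld x = x$, using $(x\rd x)' = x\rd x$ (an instance of (T4)) and then (T2).

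For the converse of part 2), part 1) already gives that the reconstructed $(S,\cdot,{}')$ is a regular involuted unary semigroup, so it suffices to deduce from (T5) that its idempotents commute. In a regular involuted semigroup $xx' = x\rd x$ and $(xx')' = xx'$, so (T5) asserts that all idempotents of the form $xx'$ commute; since these are precisely the projections (any idempotent $p$ with $p'=p$ equals $pp'$), all projections commute. Finally, for an arbitrary idempotent $e$ both $ee'$ and $e'e$ are projections and satisfy $ee'\cdot e'e = e$ and $e'e\cdot ee' = e'$, so commutativity of projections forces $e = e'$; hence every idempotent is a projection and idempotents commute, i.e.\ $(S,\cdot,{}')$ is inverse.

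The one genuinely non-mechanical step is the derivation of (B1) from (T1)--(T4): getting the substitution $w\mapsto w'$ in (T1) together with the right order of applications of (T3), (T4) and the duality identities is where the care is concentrated; everything else --- the forward directions, the involution and regularity checks, and the reduction of (T5) to commutativity of idempotents --- is straightforward once (B1) and the introduction's reconstruction observation are available. One can moreover check with the same identities that the two functors are mutually inverse (the division bimagma of the reconstructed $(S,\cdot,{}')$ is $(S,\ld,\rd)$ because $x'\cdot y = x\ld y$ and $x\cdot y' = x\rd y$, while the other composite is the identity by \eqnref{reconstruct}), which is what \thmref{main} will need even though \prpref{tamura1} as stated does not assert it.
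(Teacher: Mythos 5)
Your argument is essentially correct, but there is nothing in the paper to compare it against: Proposition \prpref{tamura1} is quoted from Tamura (Theorem 3.1 of \cite{Tamura}) and the paper gives no proof of it. Judged on its own, your key step is right --- substituting $z\mapsto z'$ in (T1) and translating both sides via (T3), $x''=x$ and (T4) does yield $(y\rd x)\rd z=y\rd(z\rd x')$, which after the same translation is exactly (B1) --- and your reduction of (T5) to commutativity of projections, followed by $e=ee'\cdot e'e$ and $e'=e'e\cdot ee'$ to show every idempotent is a projection, is a clean and correct way to obtain (I8). Two small points of care. First, (T3) and (T4) involve the shorthand $x'$, which a priori has two readings, $(x\ld x)\rd x$ and $x\ld(x\rd x)$; the paper addresses this explicitly, proving in Lemma \lemref{joint} (identity \eqnref{same}) that (T1) and (T2) alone force the two to coincide. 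You sidestep the issue by decreeing $x':=(x\ld x)\rd x$, which is workable, but you should note that the other form in \eqnref{reconstruct} then agrees with it (immediate once (B1) is in hand, or directly from \eqnref{same}). Second, under the paper's definition a regular unary semigroup must satisfy (I1) as well as (I2); you only check (I2), though (I1) follows at once by applying (I2) to $x'$ and using $x''=x$. The genuine difference between your route and the paper's related material is that you may assume (T4), which makes the derivation of (B1) short; the paper's \S\secref{Tamura} must instead derive (T4) from (T1)--(T3) alone (Proposition \prpref{AM}), which is why its chain of lemmas is substantially longer.
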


Tamura raised the question of the independence of the axioms
(T1)--(T5). This question was answered by the first-named author and McCune
(\cite{AM}, Theorem 5.2).

\begin{proposition}
\prplabel{AM}
A bimagma $(S,\ld,\rd)$ satisfying (T1)--(T3) satisfies (T4).
\end{proposition}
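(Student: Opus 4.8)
The plan is to work equationally inside the bimagma, from (T1), (T2) and (T3) (recalling that (T3) is (B2) and that all bimagmas here satisfy (B1)), and then to read off the conclusion semigroup-theoretically; throughout I write $x' := (x\ld x)\rd x = x\ld(x\rd x)$. First I would derive the left--right mirror of (T2), namely $x\rd(x\ld x) = x$: substituting $y=z=x$ in (T1) gives $(x\rd x)\ld x = x\rd(x\ld x)$, and the left-hand side is $x$ by (T2). This is exactly the identity appearing in the ``regular involuted'' row of the Theorem, so it is the natural place to start.

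Next I would record that the partial units $x\ld x$ and $x\rd x$ are idempotent for both $\ld$ and $\rd$ and satisfy $u'=u$: for instance $(x\ld x)\rd(x\ld x) = x\ld(x\rd(x\ld x)) = x\ld x$ by (B1) and the mirror identity, whence $(x\ld x)\ld(x\ld x) = \bigl((x\ld x)\rd(x\ld x)\bigr)\ld(x\ld x) = x\ld x$ by (T2); the statements about $x\rd x$ come from the same computations with (T1) in place of (B1). Two further applications of (T1) collapse $x'\ld x'$ to $x\rd x$, so that
\[
x'' = (x'\ld x')\rd x' = (x\rd x)\rd x' = (x\rd x)'\ld x = (x\rd x)\ld x = x
\]
using (T3), the fact that $(x\rd x)' = x\rd x$, and (T2). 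Hence ${}'$ is an involution.

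Now $(S,\cdot,{}')$, reconstructed by \eqnref{reconstruct}, is a unary semigroup; with $x''=x$ one gets $x\rd y = x\cdot y'$ and $x\ld y = x'\cdot y$, and the mirror identity becomes $x\cdot x'\cdot x = x$ (using $(x'x)' = x'x$), so $(S,\cdot,{}')$ is a regular unary semigroup with involutive ${}'$. Rewriting (T1) multiplicatively and applying $x''=x$ turns it into $(xy)'z' = y'(zx)'$ for all $x,y,z$. The only thing still missing for ``regular involuted'' — and it is precisely what (T4) asserts — is that ${}'$ be an anti-automorphism, $(xy)' = y'x'$. I would extract this from $(xy)'z' = y'(zx)'$ by specializing $z$: $z=x'$ gives $(xy)'x = y'x'x$, and choosing $z$ to be an appropriate one-sided identity of $(xy)'$ isolates $(xy)'$; together with $x''=x$, $x x' x = x$ and the fixed-point facts this should force $(xy)' = y'x'$. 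Then (T4) is immediate, since $(x\rd y)' = (x\cdot y')' = y''\cdot x' = y\cdot x' = y\rd x$.

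The one genuinely delicate step is the last one, extracting the anti-automorphism law from $(xy)'z' = y'(zx)'$: the correct chain of substitutions is not obvious, whereas everything before it is routine once the mirror of (T2) is in hand. In keeping with the methods of \cite{AM}, the identities needed there may well have been located with computer assistance rather than by a conceptual argument.
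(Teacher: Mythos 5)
Your preliminary work is essentially sound: the mirror identity $x\rd(x\ld x)=x$, the idempotency of the partial units, the fixed-point facts $(x\ld x)'=x\ld x$ and $(x\rd x)'=x\rd x$, and the involution $x''=x$ all follow roughly as you indicate (your route to $x''=x$ via $x'\ld x'=x\rd x$ is, if anything, a touch shorter than the paper's). But one caveat already here: (B1) is \emph{not} available. The hypothesis is an abstract bimagma satisfying (T1)--(T3), not the division bimagma of a unary semigroup, so the paper's remark that ``(B1) clearly holds'' does not apply; the paper's own proof scrupulously uses (T1) wherever you invoke (B1). Your idempotency computations survive that substitution, as you half-acknowledge, but your next move does not: to pass to the reconstructed $(S,\cdot,{}')$ and ``read off the conclusion semigroup-theoretically'' you need $\cdot$ to be associative, and associativity is exactly (B1) (see the computation following \eqnref{reconstruct}). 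Without it, three-factor expressions such as $y'x'x$ in your specialization $z=x'$ are not well defined, and the multiplicative phase of your argument has no foundation. A posteriori (B1) does hold, but only via Tamura's theorem, which requires (T4) --- precisely what is to be proved.

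The more serious problem is that the step you defer is the entire content of the proposition. Deriving $(xy)'=y'x'$ (equivalently (T4)) from the rewritten identity $(xy)'z'=y'(zx)'$ together with $x''=x$, $x(x'x)=x$ and the fixed-point facts is where all of the difficulty lies, and your plan of ``choosing $z$ to be an appropriate one-sided identity of $(xy)'$'' does not close: the natural choice $z'=(xy)(xy)'$ yields $(xy)'=y'\bigl((xy)(xy)'x\bigr)'$, which does not visibly simplify without already knowing (T4). The paper's proof spends essentially all of its effort on exactly this point, staying entirely in the bimagma language and building a chain of auxiliary identities --- $x\rd y=(x\rd y)\rd(y\rd y)$, $x\rd y=(x\rd x)\ld(x\rd y)$, $x\ld y=((y\rd y)\ld x)\ld y$, $x'\ld(y\rd z)=x\rd(z\rd y)$, $x\ld(y\rd z)'=x\ld(z\rd y)$, and so on --- which are then combined to evaluate $(x\rd y)'$ directly. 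As it stands, your proposal establishes the easy preliminaries and leaves the theorem unproved.
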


The proof in \cite{AM} is the output of a proof found by the automated
deduction tool \textsc{Prover9} \cite{Prover9}.
In fact, the statement of the result in \cite{AM} is that
(T1)--(T3) and (T5) imply (T4), but an examination of the proof output
shows that axiom (T5) was never used.

Thus axioms (T1)--(T3) characterize division bimagmas of regular involuted unary semigroups
and axioms (T1)--(T3), (T5) characterize division bimagmas of inverse semigroups.
In addition, axioms (T1)--(T3), (T5) are all independent \cite{AM}.

A problem posed in \cite{AM} is to find a humanly readable proof of
Proposition \prpref{AM}. In \S\secref{Tamura}, we give such a proof.

The paper closes with a section of open problems.

We conclude this introduction with a remark on terminology.
Both \cite{Tamura} and \cite{AM} adapted the classical term ``groupoid''
(a set with a binary operation)
to the present setting, referring to the structures $(S,\ld,\rd)$ as
``bigroupoids''. However, the categorical usage of ``groupoid''
(a small category with inverses) seems to have generally eclipsed
the older usage. Thus we have moved to the Bourbaki term \emph{magma}
for a set with a binary operation, with the obvious adaptation
\emph{bimagma} for our particular situation.

\section{New Characterizations}
\seclabel{new}

We recall the definitions of
the classes of unary semigroups we will consider. They are defined in terms
of various subsets of the following properties:
\begin{align*}
x'xx' &= x' & \text{(I1)} && (x(xx)'x)' &= x(xx)'x & \text{(I5)} \\
xx'x &= x & \text{(I2)} && (xy)' &= y'x' & \text{(I6)} \\
x'' &= x & \text{(I3)} &&  xx' &= x'x & \text{(I7)} \\
(xx')' = xx',&\quad (x'x)'=x'x & \text{(I4)} && xx'y'y &= y'yxx' & \text{(I8)}
\end{align*}
A unary semigroup $(S,\cdot,{}')$ is $E$-\emph{inversive} if it
satisfies (I1). In this case, $(S,\cdot)$ is $E$-inversive in the usual
sense and conversely, each $E$-inversive semigroup has
a choice of weak inverse ${}' : S\to S$ satisfying (I1). Note that
comparing \eqnref{ldrd} with
our reconstruction of ${}'$ in \eqnref{reconstruct} already indicates why
$E$-inversive semigroups are the most basic class we will consider.

$(S,\cdot,{}')$ is \emph{regular} if it satisfies (I1) and (I2).
In this case, $(S,\cdot)$ is regular in the usual sense, and conversely,
each regular semigroup has a choice of inverse ${}' : S\to S$
satisfying (I1), (I2).

Every regular semigroup $(S,\cdot)$ has a choice of inverse ${}'$ that
fixes idempotents, that is, $e' = e$ for every idempotent $e\in S$;
simply redefine ${}'$ on the idempotents if necessary.
It is easy to see that in a regular semigroup this property is equivalent to (I5) being an identity. In fact in a unary regular semigroup (thus the unary operation satisfies (I1) and (I2))  $x(xx)'x$ is idempotent and hence, if ${}'$ fixes idempotents, we have (I5); conversely, for $e^2=e\in S$ we claim that $e'=e$. In fact, (I5)  implies  that $e(ee)'e=(e(ee)'e)'$ and hence $ee'e=(ee'e)'$ thus proving the claim.

The equations that make up (I4) can be considered to be weak versions
of (I5); they are not of much independent interest, but we will use
them to facilitate proofs in other cases.

A unary semigroup $(S,\cdot,{}')$ is \emph{involuted}
if it satisfies (I3) and (I6). (See \cite{NS}.)

Among the most distinguished classes of semigroups are
\emph{inverse} semigroup and \emph{completely regular} semigroups.
Inverse semigroups, to which various books have
been dedicated \cite{lawson,paterson,petrich}, are involuted
regular unary semigroups satisfying (I8).
Completely regular semigroups, which have also been the
subject of a book \cite{PR}, are regular unary semigroups satisfying
(I7). Both
inverse and completely regular semigroups have idempotent-fixing inverses.
Finally, a \emph{Clifford} semigroup is a completely regular, inverse
semigroup.

In order to keep our investigations within reasonable bounds,
we will insist that the functors
$(S,\cdot,{}') \rightsquigarrow (S,\ld,\rd)$ defined by \eqnref{ldrd}
and $(S,\ld,\rd) \rightsquigarrow (S,\cdot,{}')$ defined by \eqnref{reconstruct}
are inverses of each other.  Then it follows that $x' = x\ld (x\rd x) = x'xx'$, that is,
(I1) holds. Thus from now on, we will work in classes of unary semigroups
which are, at the very least, $E$-inversive. In addition, we have
$xy = x'\ld y = x'' y$ and $xy = x\rd y' = xy''$, that is, we assume
\begin{equation}
\eqnlabel{tech}
x'' y = xy = x y''\,.
\end{equation}
(This condition will be subsumed by (I3) in all of our regular classes
of unary semigroups.)

Finally, recall that ${}'$ does not appear in the signature of a bimagma; as said after \eqnref{reconstruct}, $x'$ should be understood as a shorthand for
$(x\ld x)\rd x$ (and also to $x\ld (x\rd x)$, by (B1)),  whenever $x'$ appears in an identity involving $\ld$ and $\rd$. As the reader will realize soon without this shorthand the proofs below  would be much more difficult to follow.
\begin{theorem}
\thmlabel{E-inversive}
Let $(S,\cdot,{}')$ be an $E$-inversive unary semigroup satisfying
\eqnref{tech}. Then the division bimagma $(S,\ld,\rd)$ satisfies
the independent identities \emph{(B1)} and
\begin{align}
x' \ld y' &= x \rd y \eqnlabel{comp1}\\
x' \rd y' &= x \ld y\,.  \eqnlabel{comp2}
\end{align}

Conversely, let $(S,\ld,\rd)$ be a bimagma satisfying \emph{(B1)},
\eqnref{comp1} and \eqnref{comp2}. Then $(S,\cdot,{}')$, with ${}'$ and $\cdot$ defined
 by \eqnref{reconstruct}, is an $E$-inversive unary semigroup satisfying \eqnref{tech}.

The functors
$(S,\cdot,{}') \rightsquigarrow (S,\ld,\rd)$ defined by \eqnref{ldrd}
and $(S,\ld,\rd) \rightsquigarrow (S,\cdot,{}')$ defined by \eqnref{reconstruct}
are inverses
\end{theorem}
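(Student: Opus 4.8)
The plan is to verify the three numbered claims of the theorem in turn: (i) the division bimagma of an $E$-inversive unary semigroup satisfying \eqnref{tech} satisfies (B1), \eqnref{comp1}, \eqnref{comp2}; (ii) conversely, a bimagma satisfying those three identities yields, via \eqnref{reconstruct}, an $E$-inversive unary semigroup satisfying \eqnref{tech}; (iii) the two functors are mutually inverse; and finally the independence claim. Parts (i) and (iii) should be essentially mechanical, while part (ii) is where the real work lies.

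For (i), I would simply compute in $(S,\cdot,{}')$ using \eqnref{ldrd}. Identity (B1) is immediate from associativity, as already noted in the introduction. For \eqnref{comp1}: $x'\ld y' = x'' y' = x y'$ using the left half of \eqnref{tech}, while $x\rd y = x y'$ by definition; so the two sides agree. Symmetrically, \eqnref{comp2} follows from $x'\rd y' = x' y'' = x' y = x\ld y$, using the right half of \eqnref{tech}. So the forward direction costs nothing beyond unwinding definitions.

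For (ii), start from a bimagma $(S,\ld,\rd)$ satisfying (B1), \eqnref{comp1}, \eqnref{comp2}, and define ${}'$ and $\cdot$ by \eqnref{reconstruct}; here one must keep in mind that $x'$ is shorthand for $(x\ld x)\rd x = x\ld(x\rd x)$, the two forms being equal by (B1). The key sub-step is to establish (B2), i.e. $x\rd y' = x'\ld y$, since then the argument from the introduction shows $(S,\cdot,{}')$ is a semigroup. I expect (B2) to come by a short manipulation: from \eqnref{comp1} with $y$ replaced by $y'$ we get $x'\ld y'' = x\rd y'$, and from \eqnref{comp2} we get $x'\rd y' = x\ld y$; one then needs to reconcile these, which will require first proving a double-primes identity such as $x\ld y = x''\ld y$ or $x\rd y'' = x\rd y$ — essentially a piece of \eqnref{tech} — directly from \eqnref{comp1}, \eqnref{comp2}, (B1). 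The natural route is: apply \eqnref{comp1} and then \eqnref{comp2} (or vice versa) to turn $x'\ld y'$ back into something primed twice, obtaining relations among $x$, $x'$, $x''$; combined with (B1) these should collapse to $x'' y = xy = x y''$ and to $x'xx' = x'$. Once \eqnref{tech} and (I1) are in hand, (B2) and hence associativity follow, and $E$-inversivity is exactly (I1). The main obstacle is precisely this: extracting \eqnref{tech} and (I1) from the two compatibility identities plus (B1) without circularity, since $x'$ is itself a derived term — one has to be careful that every application of \eqnref{comp1}/\eqnref{comp2} is to genuine instances and that the shorthand is consistently interpreted.

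For (iii), the two composites must be shown to be identities on objects. Going $(S,\cdot,{}')\rightsquigarrow(S,\ld,\rd)\rightsquigarrow(S,\cdot,{}')$: the reconstructed multiplication is $x\rd y' = xy'' = xy$ by \eqnref{tech}, and the reconstructed unary operation is $(x\ld x)\rd x = (x'x)'x$, which one checks equals $x'$ using (I1) — indeed $(x'x)x'(x'x) = x'(xx'x)x'\cdot\text{(reassociate)}$, and with (I1) this reduces to $x'$, i.e. $x'x$ is a weak inverse of... more carefully, one uses (I1) directly: $x\ld(x\rd x) = x'(xx') = x'xx' = x'$. Going the other way, starting from a bimagma, one recomputes $x\ld y$ and $x\rd y$ from the reconstructed $(S,\cdot,{}')$ and must recover the original operations; this uses \eqnref{reconstruct} together with the already-proven (B2) and \eqnref{tech} in the bimagma. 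Finally, independence of (B1), \eqnref{comp1}, \eqnref{comp2} is established by exhibiting three small bimagmas (found by model-searching, e.g. with \textsc{Mace4}), each satisfying two of the identities but not the third; I would present these as explicit multiplication tables on a two- or three-element set.
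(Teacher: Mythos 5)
Your overall architecture matches the paper's: the forward direction and the functor claims are routine unwindings of \eqnref{ldrd}, \eqnref{tech} and (I1); the independence claim is settled by small finite models; and the entire weight of the converse rests on deriving (B2) from (B1), \eqnref{comp1} and \eqnref{comp2}, so that \eqnref{reconstruct} yields an associative product. The gap is that you stop exactly at that point. You correctly name the obstacle --- extracting a double-prime cancellation law from the two compatibility identities without circularity --- but then assert that the relations ``should collapse'' to \eqnref{tech} and (I1). That collapse is the only nontrivial content of the theorem, and your proposed route is not precise enough to check: from \eqnref{comp1} with $y$ replaced by $y'$ you get $x'\ld y''=x\rd y'$, so to reach (B2) you need $x'\ld y''=x'\ld y$; but the cancellation laws you propose to prove first ($x\ld y=x''\ld y$ or $x\rd y''=x\rd y$) are not obviously obtainable as instances of the hypotheses, nor do they obviously yield the one you actually need.

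The paper closes this gap by first recording
\[
x''\ld y''=x'\rd y'=x\ld y \qquad\text{and}\qquad x''\rd y''=x'\ld y'=x\rd y
\]
(immediate from \eqnref{comp1} and \eqnref{comp2}) and then proving the single identity $x'''=x'$ by an explicit ten-step chain: starting from $x'''=(x''\ld x'')\rd x''=(x'\rd x')\rd x''$, expanding $x'$ via \eqnref{reconstruct}, reassociating with (B1), and stripping primes with \eqnref{comp1} and \eqnref{comp2} until the expression returns to $x\ld(x\rd x)=x'$. With $x'''=x'$ in hand, (B2) follows in one line: $x\rd y'=x''\rd y'''=x''''\rd y'''=x''\rd y'=x'\ld y$. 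You need either to reproduce a derivation of this kind or to supply a substitute; without it the converse direction is an outline, not a proof. Two smaller points: in your part (iii) you momentarily write the reconstructed unary operation as $(x'x)'x$, which confuses $\ld$ with $\rd$ --- the correct unwinding is $x\ld(x\rd x)=x'(xx')=x'xx'=x'$ by (I1), as you note afterwards --- and the independence models must actually be exhibited (the paper uses two two-element models and one three-element model) rather than merely promised.
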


\begin{proof}
Suppose $(S,\ld,\rd)$ is a bimagma satisfying \emph{(B1)},
\eqnref{comp1} and \eqnref{comp2}. We claim that the unary semigroup $(S,\cdot,{}')$ whose operations are defined by \eqnref{reconstruct} is an $E$-inversive unary semigroup satisfying
\eqnref{tech}. We must first show that the two possible
definitions of $\cdot$ in \eqnref{reconstruct} coincide. Thus we have to prove that (B2) holds.
Note that by \eqnref{comp1} and \eqnref{comp2}
\begin{equation}
\eqnlabel{e-inv-tmp1}
x'' \ld y'' =x'\rd y'= x \ld y \qquad\text{and}\qquad x'' \rd y''=x'\ld y' = x\rd y\,.
\end{equation}
Next we show
\begin{equation}
\eqnlabel{e-inv-tmp2}
x''' = x'
\end{equation}
as follows:
\begin{alignat*}{4}
x''' &\by{reconstruct} (x'' \ld x'') \rd x''
&&\by{comp1} (x'\rd x')\rd x''
&&\by{reconstruct} [(x\ld (x\rd x))\rd x'] \rd x'' \\
&\byx{(B1)} [x\ld ((x\rd x)\rd x')] \rd x''
&&\by{comp2} [x\ld ((x'\ld x')\rd x')] \rd x''
&&\byx{(B1)} [x\ld (x'\ld (x'\rd x'))] \rd x'' \\
&\by{reconstruct} [x \ld x'']\rd x''
&&\byx{(B1)} x\ld [x''\rd x'']
&&\by{e-inv-tmp1} x\ld [x\rd x] \\
&\by{reconstruct} x'\,. && &&
\end{alignat*}
Thus $x\rd y' \by{e-inv-tmp1} x''\rd y''' \by{e-inv-tmp2} x''''\rd y'''
\by{e-inv-tmp1} x''\rd y' \by{comp2} x'\ld y$, so that (B2) holds as claimed.

Since (B1) and (B2) hold, $(S,\cdot,{}')$ is a semigroup. For \eqnref{tech},
we compute $xy'' = x'\ld y'' \by{comp1} x\rd y' = xy$, and a
similar calculation gives the other identity.
We have $x\ld y = x'\rd y' = x'y$ by \eqnref{comp2} and
similarly, $x\rd y = xy'$. Then $E$-inversivity follows easily:
$x'xx' = x'(xx') = x \ld (x\rd x) = x'$.

Conversely, assume that $(S,\cdot,{}')$ is an $E$-inversive semigroup satisfying
\eqnref{tech} and let $(S,\ld,\rd)$ be the associated division bimagma.
As noted before, (B1) holds for any unary semigroup. Also,
$x'\ld y' = x'' y' = x y' = x\rd y$, which is \eqnref{comp1}, and
\eqnref{comp2} is similarly proved.

The assertion regarding inverse functors is now clear.

Finally, we check independence of (B1), \eqnref{comp1} and \eqnref{comp2}.
On a two-element set $S = \{a,b\}$, define $x\rd y = b$,
$x\ld a = a$ and $x\ld b = b$  for all $x,y\in S$. This is easily
seen to satisfy (B1) and \eqnref{comp1}, but not
\eqnref{comp2}. Reversing the roles of $\ld$ and $\rd$ gives a model
showing the independence of \eqnref{comp1}.
Next, on a three-element
set $S = \{a,b,c\}$, define $x \ld x = x$,
$a \ld b = b\ld a = c$, $a\ld c = c\ld a = b \ld c = c\ld b = a$
and $x \rd y = x\ld y$ for all $x\in S$.
This model can be seen to satisfy \eqnref{comp1} and \eqnref{comp2}
but not (B1).
\end{proof}

We will use the assertion about inverse functors implicitly in what
follows. Once we have established that some class of bimagmas
gives, at the very least, an $E$-inversive semigroup satisfying \eqnref{tech},
then we will usually rewrite the division operations in terms of the
semigroup multiplication and unary map.

Next we consider the case of a regular unary semigroup $(S,\cdot,{}')$.
Our choice of inverse is somewhat restricted
by \eqnref{tech}.

\begin{lemma}
\lemlabel{doubleprime}
Let $(S,\cdot,{}')$ be an $E$-inversive unary semigroup satisfying
\eqnref{tech}. Then \emph{(I3)} holds if and only if
$(S,\cdot,{}')$ is a regular unary semigroup.
\end{lemma}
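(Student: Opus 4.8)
The plan is to prove the two implications separately; each is short, and the content lies entirely in how one exploits the two halves $x''y = xy$ and $xy'' = xy$ of \eqnref{tech} together with the $E$-inversive law (I1) (and, in one direction, (I2)).

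For the direction ``(I3) $\Rightarrow$ regular'', I would note that $(S,\cdot,{}')$ is already $E$-inversive, so (I1) holds and it suffices to derive (I2). Instantiating (I1) at $x'$ in place of $x$ gives $x''x'x'' = x''$, and substituting $x'' = x$ throughout yields $xx'x = x$, which is (I2). Hence $(S,\cdot,{}')$ is regular.

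For the converse, I would assume (I1) and (I2). First I record the identity $x''x' = xx'$, which is the half $x''y = xy$ of \eqnref{tech} with $y = x'$. Then, starting again from $x'' = x''x'x''$ --- this being (I1) instantiated at $x'$ --- I rewrite the leading $x''x'$ as $xx'$ to reach $x'' = (xx')x''$, apply the other half of \eqnref{tech} (namely $uy'' = uy$ with $u = xx'$ and $y = x$) to obtain $x'' = (xx')x = xx'x$, and finish via (I2) to conclude $x'' = x$, which is (I3).

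The only real obstacle is locating the right substitutions: one must apply (I1) not at $x$ but at $x'$, so that a factor $x''$ appears on which \eqnref{tech} can act, and one must use \emph{both} halves of \eqnref{tech} in the converse --- the first to replace $x''x'$ by $xx'$, the second to replace the remaining $(xx')x''$ by $(xx')x$. Once these moves are identified the argument is a two-line chain of equalities, and no auxiliary results are needed.
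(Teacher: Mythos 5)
Your proof is correct and takes essentially the same route as the paper: the paper's converse is the one-line chain $x = xx'x = x''x'x'' = x''$ (by (I2), both halves of \eqnref{tech}, and (I1) instantiated at $x'$), which is exactly your computation read in the opposite direction. Your forward direction likewise uses the same instantiation of (I1) at $x'$ that the paper dismisses as ``clear.''
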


\begin{proof}
The ``only if'' direction is clear. For the converse,
$x = xx'x \byx{(I3)} x''x'x'' \by{tech} x''$.
\end{proof}

\begin{theorem}
\thmlabel{regular}
Let $(S,\cdot,{}')$ be a regular unary semigroup satisfying \emph{(I3)}.
Then the division bimagma $(S,\ld,\rd)$ satisfies the independent
identities \emph{(B1)}, \emph{(B2)} and
\begin{equation}
x' \ld (x \ld x) = x\,. \eqnlabel{reg2}
\end{equation}

Conversely, let $(S,\ld,\rd)$ be a bimagma satisfying (B1), (B2)
and \eqnref{reg2}. Then  $(S,\cdot,{}')$, with ${}'$ and $\cdot$ defined by \eqnref{reconstruct},
is a regular unary semigroup satisfying 
(I3).
\end{theorem}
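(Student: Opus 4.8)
The plan is to dispatch the direct direction by a routine computation and then concentrate on the converse, which carries the content; the assertion that the two functors are mutually inverse will drop out at the end. For the direct direction, the division bimagma has $x\ld y = x'y$ and $x\rd y = xy'$: then (B1) is just associativity, (B2) holds because $x\rd y' = x(y')' = xy''$ and $x'\ld y = (x')'y = x''y$ are both $xy$ by (I3), and \eqnref{reg2} holds because $x'\ld(x\ld x) = (x')'(x'x) = x''(x'x) = x(x'x) = xx'x = x$ using (I3) and then (I2). Nothing here is delicate.

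For the converse, I would take $(S,\ld,\rd)$ satisfying (B1), (B2), \eqnref{reg2}, form $(S,\cdot,{}')$ via \eqnref{reconstruct} (already a semigroup, by the Introduction), and use freely the consequences $ab = a'\ld b = a\rd b'$ (from (B2)) and $x' = (x\ld x)\rd x = x\ld(x\rd x)$ (from (B1)). The whole thing reduces to proving (I3), i.e. $x'' = x$: once that is known, \eqnref{reg2} reads $x(x'x) = x$, which is (I2), so $(S,\cdot,{}')$ is regular; and then $x\ld y = x'y$, $x\rd y = xy'$ hold (these are precisely the (I3)-rewritings of what (B2) already gives), so the two functors are mutually inverse.

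The route to (I3) I foresee is: (1) from (B1), (B2) alone, $x'' = (x'\ld x')\rd x' = (xx')\rd x' = xx'x$; (2) instantiating \eqnref{reg2} at $x'$ gives $x''\ld(x'\ld x') = x'$, that is, (I1) $x'xx' = x'$, so $(S,\cdot,{}')$ is $E$-inversive; (3) using (I1), $(x'')' = (x''\ld x'')\rd x'' = (x'x)\rd x'' = x'xx' = x'$, so $x''' = x'$; (4) compute $x'\ld v$ in two ways — directly $x'\ld v = xv$, but also, since $x' = (x'')'$ by step (3), $x'\ld v = (x'')'\ld v = x''\rd v' = x''v$ using (B2) — whence the left-translation identity $xv = x''v$ for all $v\in S$; (5) rewrite \eqnref{reg2} as $x(x\ld x) = x$ and put $p = x\ld x$, so $xp = x$; (6) combine: $x = xp = x''p$ by step (4), while $x''p = x(x'x)p = x\bigl(x'(xp)\bigr) = x(x'x) = xx'x = x''$, so $x = x''$.

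I expect step (4) to be the single real obstacle: realizing that the two descriptions of the element $x'\ld v$ must coincide, producing the equality $xv = x''v$, is exactly what makes the telescoping in step (6) go through; steps (1)--(3) and (5) are mechanical unwindings of the definitions. For the independence clause I would exhibit three small models — one satisfying (B2) and \eqnref{reg2} but not (B1), one satisfying (B1) and \eqnref{reg2} but not (B2), and one satisfying (B1) and (B2) but not \eqnref{reg2} (for instance on $\{a,b\}$ with $x\ld y = a$ for all $x,y$ and a suitable choice of $\rd$) — each checked by hand.
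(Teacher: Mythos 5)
Your proof is correct, but your converse argument takes a genuinely different route from the paper's. The paper stays entirely inside the $(\ld,\rd)$-calculus: it derives a chain of roughly a dozen auxiliary bimagma identities (for instance $x'\ld x'=x\rd x$, $x''\ld x=x\ld x''$, $x'''\ld x'''=x'\ld x'$, $x'''=x'$, and so on) culminating in $x''=x$, and only then translates back, verifying \eqnref{comp1} and \eqnref{comp2} and invoking Theorem \thmref{E-inversive} and Lemma \lemref{doubleprime} to conclude regularity. You instead pass immediately to the semigroup language --- which is legitimate, since (B1) and (B2) already make \eqnref{reconstruct} an associative product satisfying $u'\ld v=uv=u\rd v'$ --- and pivot on $x''=xx'x$, then (I1) by instantiating \eqnref{reg2} at $x'$, then $x'''=x'$, then the left-translation identity $x''v=xv$, and finally the telescoping $x=xp=x''p=x(x'(xp))=xx'x=x''$ with $p=x\ld x$. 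All six steps check out; the only place where you lean on something unstated is step (3), where $x''\ld x''=x'x''=x'(xx'x)=x'x$ needs both (I1) and step (1). Your route is considerably shorter and more transparent; what the paper's purely equational derivation buys is a proof that never leaves the bimagma signature, which is convenient for machine verification but not needed for correctness. Two small points to tidy up: in the direct direction you should remark that the shorthand $x'=(x\ld x)\rd x=x'xx'$ agrees with the given unary operation precisely because (I1) holds in a regular unary semigroup (the paper instead expands the shorthand explicitly); and the independence claim is only sketched --- the one model you specify (both divisions constantly equal to a fixed element of a two-element set) does satisfy (B1) and (B2) while failing \eqnref{reg2}, but the remaining two models still need to be exhibited.
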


Note that \eqnref{reg2} can be replaced by its ``mirror image''
$(x\rd x)\rd x' = x$.

\begin{proof}
Suppose $(S,\ld,\rd)$ is a bimagma satisfying (B1), (B2) and \eqnref{reg2}.
By a sequence of calculations, we will show that (I3) holds. Firstly, we compute
\begin{equation}
\eqnlabel{reg-18}
x' \ld (x''\ld x) \byx{(B2)} x'\ld (x'\rd x') \by{reconstruct} x''\,.
\end{equation}
Next, we have
\begin{equation}
\eqnlabel{reg-30}
x'\ld x' \by{reconstruct} x'\ld ((x\ld x)\rd x) \byx{(B1)} (x'\ld (x\ld x))\rd x
\by{reg2} x\rd x\,.
\end{equation}
Third, we compute
\begin{equation}
\eqnlabel{reg-34}
\begin{alignedat}{2}
x''\ld y &\byx{(B2)} x'\rd y' &&\by{reconstruct} (x\ld (x\rd x))\rd y' \\
&\byx{(B1)} x\ld ((x\rd x)\rd y')
&&\by{reg-30} x\ld ((x'\ld x')\rd y') \\
&\byx{(B1)} x\ld (x'\ld (x'\rd y')) &&\byx{(B2)} x\ld (x'\ld (x''\ld y))\,.
\end{alignedat}
\end{equation}
Now in \eqnref{reg-34}, set $y = x$ to get
\begin{equation}
\eqnlabel{reg-45}
x''\ld x = x\ld (x'\ld (x''\ld x)) \by{reg-18} x\ld x''\,.
\end{equation}
Next, we compute
\begin{equation}
\eqnlabel{reg-46a}
x''\ld x'' \by{reg-30} x'\rd x' \byx{(B2)} x''\ld x
\by{reg-45} x\ld x''\,.
\end{equation}
Next, we have
\begin{equation}
\eqnlabel{reg-39a}
\begin{alignedat}{2}
x'\ld (x\ld (x'\ld y)) &\byx{(B2)} x'\ld (x\ld (x\rd y'))
&&\byx{(B1)} x'\ld ((x\ld x)\rd y') \\
&\byx{(B1)} (x'\ld (x\ld x))\rd y')
&&\by{reg2} x\rd y' \\
&\byx{(B2)} x'\ld y\,.
\end{alignedat}
\end{equation}
Now in \eqnref{reg-39a}, take $y = x''\ld x$ and use \eqnref{reg-18} as
follows:
\begin{equation}
\eqnlabel{reg-39}
x'\ld (x\ld x'') \by{reg-18}
x'\ld (x\ld (x'\ld (x''\ld x))) \by{reg-39a}
x'\ld (x''\ld x) \by{reg-18} x''\,.
\end{equation}
Next we show
\begin{equation}
\eqnlabel{reg-49}
\begin{alignedat}{2}
x'''\ld x''' &\by{reg-30} x''\rd x'' &&\by{reconstruct} (x'\ld (x'\rd x')) \rd x'' \\
&\byx{(B1)} x'\ld ((x'\rd x')\rd x'') &&\byx{(B2)} x'\ld ((x''\ld x)\rd x'') \\
&\byx{(B1)} x'\ld (x''\ld (x\rd x'')) &&\byx{(B2)} x'\ld (x''\ld (x'\ld x')) \\
&\by{reg2} x'\ld x'\,. &&
\end{alignedat}
\end{equation}
Now we have
\begin{equation}
\eqnlabel{reg-54}
x''' \by{reg-39} x''\ld (x'\ld x''')
\by{reg-46a} x''\ld (x'''\ld x''')
\by{reg-49} x''\ld (x'\ld x') \by{reg2} x'\,.
\end{equation}
Next, we have
\begin{equation}
\eqnlabel{reg-55}
\begin{alignedat}{2}
x\ld (x'\ld y) &\by{reg-54}
x\ld (x'''\ld y) &&\by{reg-34}
x\ld (x'\ld (x''\ld (x'''\ld y))) \\
&\by{reg-34} x''\ld (x'''\ld y)
&&\by{reg-54} x''\ld (x'\ld y)\,.
\end{alignedat}
\end{equation}
Take $y = x\ld x$ in \eqnref{reg-55} to get
\begin{equation}
\eqnlabel{reg-61a}
x\ld x \by{reg2} x\ld (x'\ld (x\ld x))
\by{reg-55} x''\ld (x'\ld (x\ld x))
\by{reg2} x''\ld x\,,
\end{equation}
and so
\begin{equation}
\eqnlabel{reg-61}
x\ld x'' \by{reg-45} x''\ld x \by{reg-61a} x\ld x\,.
\end{equation}
Finally,
\[
x'' \by{reg-39} x'\ld (x\ld x'') \by{reg-61} x'\ld (x\ld x)
\by{reg2} x\,.
\]
Thus (I3) holds.

Now we may compute
\[
x'\ld y' \byx{(B2)} x\rd y'' \byx{(I3)} x\rd y
\quad\text{and}\quad
x'\rd y' \byx{(B2)} x''\ld y \byx{(I3)} x\ld y\,.
\]
Thus \eqnref{comp1} and \eqnref{comp2} hold.
By Theorem \thmref{E-inversive}, $(S,\cdot,{}')$ is an $E$-inversive unary
semigroup. Since (I3) holds, $(S,\cdot,{}')$ is a regular
unary semigroup by Lemma \lemref{doubleprime}.

Suppose now $(S,\cdot,{}')$ is a regular unary semigroup and that (I3)
holds. Then $(S,\cdot,{}')$ is $E$-inversive and satisfies \eqnref{tech},
so that (B1) and (B2) hold by Theorem \thmref{E-inversive}.
For \eqnref{reg2}, we compute
$x' \ld (x \ld x) = (x'(xx'))'(x'x)
\byx{(I1)} x''x'x \byx{(I3)} xx'x \byx{(I2)} x$.

Finally, we check independence of the axioms.
On a two-element set $S = \{a,b\}$, define $x \rd x = x$, $a\rd b = a$,
$b\rd a = b$ and $x\ld y = y\rd x$ for all $x,y\in S$. Then $(S,\ld,\rd)$ is a model
satisfying (B1), \eqnref{reg2} but not (B2).
Next, on $S = \{a,b\}$, define $x\ld y = x\rd y = b$ for all $x,y\in S$.
This model satisfies (B1), (B2) but not \eqnref{reg2}.
Finally, on $S = \{a,b\}$, define $x\rd x = b$, $a\rd b = b\rd a = a$,
$x\ld a = a$ and $x\ld b = b$ for all $x\in S$. This model satisfies \eqnref{reg2},
(B2) but not (B1).
\end{proof}

The next class of regular unary semigroups we consider is probably not
of much independent interest. In particular, we do not see that
(I4) reveals any structural features about the underlying
regular semigroup as, for instance, (I5) does, because in general, not
every idempotent in a regular unary semigroup satisfying (I4) will have
the form $xx'$ or $x'x$.  A minimal example (unique up to isomorphism) is
\[
\begin{tabular}{|r|rrrr|}
\hline
* & 0 & 1 & 2 & 3\\
\hline
    0 & 0 & 3 & 0 & 3 \\
    1 & 2 & 1 & 2 & 1 \\
    2 & 2 & 1 & 2 & 1 \\
    3 & 0 & 3 & 0 & 3 \\
\hline
\end{tabular}
\]
with $0'=1$ and $x'=0$ elsewhere; in this band $00'=01=3\neq 0$ and $0'0=10=2\neq 0$.

In any case, we include the characterization of this class here because it facilitates
the proof in the case of a regular unary semigroup with an idempotent
fixing inverse. First, for convenience, we introduce a key identity:
\[
x\rd (x\ld x) = x\,. \tag{B3}
\]

\begin{theorem}
\thmlabel{strange}
Let $(S,\cdot,{}')$ be a regular unary semigroup satisfying \emph{(I3)} and \emph{(I4)}.
Then the division bimagma $(S,\ld,\rd)$ satisfies the independent identities
\emph{(B1)}, \emph{(B2)}, \emph{(B3)} and
\begin{equation}
\eqnlabel{str3}
(x\rd x)\ld x = x\,.
\end{equation}

Conversely, let $(S,\ld,\rd)$ be a bimagma satisfying \emph{(B1)}, \emph{(B2)},
\emph{(B3)} and \eqnref{str3}. Then  $(S,\cdot,{}')$,
with ${}'$ and $\cdot$ defined by \eqnref{reconstruct}, is a regular unary semigroup
satisfying (I3) and (I4).
\end{theorem}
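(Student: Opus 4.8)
The plan is to handle the forward direction directly and to reduce the converse to Theorem~\thmref{regular}. For the forward direction, suppose $(S,\cdot,{}')$ is a regular unary semigroup satisfying (I3) and (I4). Since it is in particular regular and satisfies (I3), Theorem~\thmref{regular} already gives (B1) and (B2) (indeed also \eqnref{reg2}). For (B3) we compute $x\rd(x\ld x) = x(x'x)'$, which by (I4) and then (I2) equals $x(x'x) = xx'x = x$; identity \eqnref{str3} is the mirror computation, $(x\rd x)\ld x = (xx')'x = (xx')x = x$. So this direction is routine.

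For the converse, assume $(S,\ld,\rd)$ satisfies (B1), (B2), (B3) and \eqnref{str3}, and write $x' := (x\ld x)\rd x = x\ld(x\rd x)$, the two expressions agreeing by (B1) as in \eqnref{reconstruct}. The plan is to prove \eqnref{reg2} and then quote Theorem~\thmref{regular}. The crucial observation is that $x\ld x$ is already a fixed point of ${}'$ at the level of the bimagma. First, by (B1) and then (B3), $(x\ld x)\rd(x\ld x) = x\ld(x\rd(x\ld x)) = x\ld x$. Substituting this into \eqnref{str3} with $x\ld x$ in place of $x$ gives $(x\ld x)\ld(x\ld x) = x\ld x$. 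Combining these two identities with the definition of ${}'$ yields $(x\ld x)' = (x\ld x)\ld((x\ld x)\rd(x\ld x)) = (x\ld x)\ld(x\ld x) = x\ld x$. Now instantiate (B2), in the form $a\rd b' = a'\ld b$, at $b = x\ld x$; since $(x\ld x)' = x\ld x$, this reads $a\rd(x\ld x) = a'\ld(x\ld x)$, and taking $a = x$ and using (B3) on the left-hand side gives $x = x\rd(x\ld x) = x'\ld(x\ld x)$, which is exactly \eqnref{reg2}. By Theorem~\thmref{regular}, $(S,\cdot,{}')$ is therefore a regular unary semigroup satisfying (I3); in particular $x'' = x$, so $x\ld y = x'y$ and $x\rd y = xy'$. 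Rewriting the identity $(x\ld x)' = x\ld x$ obtained above in this notation gives $(x'x)' = x'x$ for all $x$, and replacing $x$ by $x'$ and using (I3) converts this into $(xx')' = xx'$. Hence (I4) holds, completing the converse; that the two functors are mutually inverse then follows as in Theorem~\thmref{regular}.

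Finally, as in the earlier theorems, independence of (B1), (B2), (B3) and \eqnref{str3} would be established by exhibiting four small bimagmas, one for each of the four identities, satisfying the other three but not it; such models are readily found by a brief computer search and verified by hand. The only non-routine ingredient in the whole argument is the observation that $x\ld x$ is a ${}'$-fixed point, together with its short derivation from (B1), (B3) and \eqnref{str3}; I expect this to be the crux. Without that shortcut one would be forced into a long equational chase toward \eqnref{reg2}, in the spirit of the converse half of Theorem~\thmref{regular}, and that is the step I would anticipate being the main obstacle to a proof carried out by hand.
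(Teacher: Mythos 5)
Your proposal is correct and takes essentially the same route as the paper: the forward direction is the same computation from (I4) and (I2), and the converse pivots, just as the paper's proof does, on deriving $(x\ld x)' = x\ld x$ from (B1), (B3) and \eqnref{str3} in order to establish \eqnref{reg2} and invoke Theorem \thmref{regular}. The only divergences are minor --- you obtain $(xx')' = xx'$ from $(x'x)' = x'x$ via (I3) where the paper runs the dual bimagma calculation to get $(x\rd x)' = x\rd x$, and you defer the independence witnesses to a computer search where the paper exhibits the explicit two- and three-element models.
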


Note that \eqnref{str3} is just (T2) from the Introduction.

\begin{proof}
Suppose $(S,\ld,\rd)$ satisfies (B1), (B2),
(B3) and \eqnref{str3}. We will verify \eqnref{reg2}. Firstly,
\begin{equation}
\eqnlabel{str-goal2}
\begin{alignedat}{2}
(x\ld x)' &\by{reconstruct} (x\ld x)\ld [(x\ld x)\rd (x\ld x)]
&&\byx{(B1)} (x\ld x)\ld [x\ld (x\rd (x\ld x))] \\
&\byx{(B3)} (x\ld x)\ld [x\ld x]
&&\byx{(B3)} [x\ld (x\rd (x\ld x))]\ld [x\ld x] \\
&\byx{(B1)} [(x\ld x)\rd (x\ld x)]\ld [x\ld x]
&&\by{str3} x\ld x\,.
\end{alignedat}
\end{equation}
A dual calculation gives
\begin{equation}
\eqnlabel{str-goal1}
(x\rd x)' = x\rd x\,.
\end{equation}
Thus $x' \ld (x\ld x) \byx{(B2)} x\rd (x\ld x)' \by{str-goal2} x\rd (x\ld x)
\byx{(B3)} x$. This is \eqnref{reg2}. By Theorem \thmref{regular},
$(S,\cdot,{}')$ is a regular unary semigroup satisfying (I3). By
\eqnref{str-goal2}, we have $(x'x)' = (x\ld x)' = x \ld x = x'x$,
and \eqnref{str-goal1} similarly gives $(xx')' = xx'$. Thus (I4)
holds.

Conversely, suppose $(S,\cdot,{}')$ is a regular unary semigroup satisfying (I3) and
(I4). By Theorem \thmref{regular}, (B1) and (B2) hold.
For (B3), $x\rd (x\ld x) = x(x'x)'
\byx{(I4)} xx'x \byx{(I2)} x$, and \eqnref{str3} is proved similarly.

On $S = \{a,b\}$, define $a\rd x = b\rd b = a$,
$b\rd a = b$ and $x\ld y = y\rd x$ for all $x,y\in S$. This gives a
model satisfying (B1), (B3) and \eqnref{str3}, but not (B2).
Again on set $S = \{a,b\}$, define
$a\rd x = a$, $b\rd x = b$, $a\ld x = b$, $b\ld x = a$ for all $x\in S$.
This is a model satisfying (B1), (B2) and (B3),
but not \eqnref{str3}. Exchanging the roles of $\ld$ and $\rd$ gives a model
satisfying (B1), \eqnref{str3} and (B2),
but not (B3). Finally, on $S = \{a,b,c\}$,
define $a\rd a = b\rd a = a$, $a\rd b = a\rd c = b$, $b\rd b = c\rd c = c$,
$b\rd c = c\rd a = b$ and $x\ld y = y\rd x$ for all $x,y\in S$. This is
a model satisfying (B2), (B3) and \eqnref{str3}, but not (B1).
\end{proof}

Next we characterize regular unary semigroups with an idempotent fixing
inverse.

\begin{theorem}
\thmlabel{idem-respect}
Let $(S,\cdot,{}')$ be a regular unary semigroup satisfying \emph{(I3)} and
\emph{(I5)}. Then the division bimagma $(S,\ld,\rd)$ satisfies
\emph{(B1)}, \emph{(B2)}, \emph{(B3)} and
\begin{equation}
\eqnlabel{ir4}
(x\rd (y\ld x))\rd (y\ld y) = ((x\rd (y\ld x))\rd y)\ld y\,.
\end{equation}

Conversely, let $(S,\ld,\rd)$ be a bimagma satisfying
\emph{(B1)}, \emph{(B2)}, \emph{(B3)} and \eqnref{ir4}.
Then  $(S,\cdot,{}')$, with ${}'$ and $\cdot$ defined by \eqnref{reconstruct} is
a regular unary semigroup satisfying \emph{(I3)} and \emph{(I5)}.
\end{theorem}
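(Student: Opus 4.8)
The plan is to leverage Theorem \thmref{strange}: a bimagma satisfying (B1), (B2), (B3) together with \eqnref{str3} is the division bimagma of a regular unary semigroup satisfying (I3) and (I4). So the forward direction requires showing that a regular unary semigroup with (I3) and (I5) satisfies \eqnref{str3} (it already satisfies (I4), since idempotent-fixing inverses fix $xx'$ and $x'x$) and satisfies \eqnref{ir4}. For (I5)$\Rightarrow$(I4) I would recall the argument already sketched in the text: $x(xx)'x$ is idempotent in a regular unary semigroup, and (I5) forces the inverse to fix every idempotent, in particular $xx'$ and $x'x$. With (I4) in hand, \eqnref{str3} follows as in the proof of Theorem \thmref{strange}. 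For \eqnref{ir4}, I would translate it into semigroup notation using $x\rd y = xy'$ and $x\ld y = x'y$ (valid once (I3) holds, by Theorem \thmref{E-inversive}): the left side becomes $x(y'x)'(y'y)'$ and the right side $(x(y'x)'y')'y$. Unwinding, $y'x$ has a well-behaved inverse because $x\mapsto x'$ is idempotent-fixing; I expect both sides to reduce to an expression of the form $e\cdot e$ where $e$ is an idempotent built from $x,y$, and then to equality. The key is that $x(y'x)'$ is an idempotent (it equals $x(y'x)'y'\cdot y \cdot \text{something}$, or more directly one checks $(x(y'x)')^2 = x(y'x)'$ using (I2)), so multiplying on the right by $(y'y)'=y'y$ versus forming $(\ldots y')'y = y'\cdot(\ldots)'\cdot\ldots$; the idempotent-fixing property should make these coincide.

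For the converse, I would argue as follows. Since (B1), (B2), (B3) hold and \eqnref{ir4} is in place, I first want \eqnref{str3} so that Theorem \thmref{strange} applies and gives a regular unary semigroup with (I3) and (I4). This is the step I expect to be the main obstacle: deriving \eqnref{str3} from (B1), (B2), (B3), \eqnref{ir4} purely bimagma-theoretically, before we have access to (I3) and the clean translation into $x'y$, $xy'$. I would look for a specialization of \eqnref{ir4} — perhaps setting $y = x$, or $y$ equal to something like $x\ld x$ or $x\rd x$ — that collapses one side to $x$ and the other to $(x\rd x)\ld x$ or a near relative, using (B1) and (B3) to shuttle parentheses. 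A promising move is $y := x$ in \eqnref{ir4}: the left becomes $(x\rd(x\ld x))\rd(x\ld x) \byx{(B3)} x\rd(x\ld x) \byx{(B3)} x$, wait — that already uses (B3) twice and gives $x$ on the left; the right becomes $((x\rd(x\ld x))\rd x)\ld x \byx{(B3)} (x\rd x)\ld x$, so \eqnref{ir4} with $y=x$ reads precisely $x = (x\rd x)\ld x$, which is \eqnref{str3}. So in fact \eqnref{str3} is an immediate consequence of \eqnref{ir4} and (B3), and Theorem \thmref{strange} applies directly, handing us a regular unary semigroup satisfying (I3) and (I4).

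It then remains to upgrade (I4) to (I5). Having (I3), I may rewrite everything in semigroup terms, and \eqnref{ir4} becomes an identity among products in $(S,\cdot,{}')$. The target (I5) says $x(xx)'x$ is fixed by ${}'$; equivalently, by the discussion in the text, that ${}'$ fixes all idempotents, and since (I4) already gives $(xx')' = xx'$ and $(x'x)' = x'x$, I need to push this to arbitrary idempotents $e = e^2$. The standard trick is to write a general idempotent as a product and use \eqnref{ir4} with well-chosen $x,y$ to pin down its inverse. Concretely, given $e = e^2$, taking $x := e$ and an appropriate $y$ (for instance $y$ with $y'x = e$, which one can arrange since $e$ is idempotent and regular, e.g.\ $y := e$ again, giving $y'x = e'e$) and chasing \eqnref{ir4} should yield $e' = e$. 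The main work — and the part I am least certain about in advance — is choosing the substitution in \eqnref{ir4} that isolates an arbitrary idempotent; I expect to need the identity in the form $x(y'x)'(y'y) = (x(y'x)'y')'y$ and to exploit that $x(y'x)'$ and $(y'y)$ are idempotents whose inverses are already known by (I4), bootstrapping from there to all idempotents. Finally, I would verify independence of (B1), (B2), (B3), \eqnref{ir4} by exhibiting small bimagma models for each, along the lines of the models given in the proofs of Theorems \thmref{regular} and \thmref{strange}.
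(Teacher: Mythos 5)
Your overall strategy matches the paper's: the specialization $y=x$ in \eqnref{ir4}, reduced by (B3), yields \eqnref{str3} exactly as you computed, so Theorem \thmref{strange} applies and produces a regular unary semigroup satisfying (I3) and (I4); in the forward direction, (I5) implies (I4) (since $xx'$ and $x'x$ are idempotents), Theorem \thmref{strange} gives (B1)--(B3), and \eqnref{ir4} is checked by translating into semigroup notation. One slip in that last step: you assert that $x(y'x)'$ is idempotent and that this follows from (I2); that is false in general, since $(x(y'x)')^2=x(y'x)'x(y'x)'$ and there is no reason for $(y'x)'x(y'x)'$ to equal $(y'x)'$. The idempotent actually needed is $x(y'x)'y'$, which satisfies $x(y'x)'y'\cdot x(y'x)'y'=x(y'x)'(y'x)(y'x)'y'=x(y'x)'y'$ by (I1); then the left side of \eqnref{ir4} is $x(y'x)'(y'y)'=x(y'x)'y'y$ by (I4) and the right side is $(x(y'x)'y')'y=x(y'x)'y'y$ by (I5) applied to that idempotent.

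The substantive gap is in the converse, precisely where you flag your own uncertainty: upgrading (I4) to (I5), i.e.\ proving $e'=e$ for an arbitrary idempotent $e$. The single substitution you propose, $x=y=e$, does not suffice. Translating \eqnref{ir4} (applying (I4) to the left side and replacing $y$ by $y'$) gives $x(yx)'yy'=(x(yx)'y)'y'$; setting $x=y=e$ and using $e^2=e$ and (I2) collapses this only to $ee'=e'e'$, which by itself does not force $e'=e$. The paper's proof requires a second pass: substituting $x=y=e'$ into the same identity and repeatedly using $ee'=e'e'$ together with (I1)--(I4) reduces the left side to $e'e$ and the right side to $e$, whence $e'e=e$ and finally $e'=(e'e)'=e'e=e$ by (I4). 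Without this second substitution (or an equivalent further argument) the converse direction of your proof is incomplete. The independence claim you mention at the end is not part of this theorem; the paper handles it separately in Proposition \prpref{indep}.
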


We will defer discussing the independence of the identities (B1), (B2), (B3) and \eqnref{ir4}
until Proposition \prpref{indep}.

\begin{proof}
Suppose $(S,\ld,\rd)$ satisfies (B1), (B2), (B3) and \eqnref{ir4}.
Setting $y = x$ in \eqnref{ir4} and using (B3) twice on the
left side and once on the right side, we get $(x\rd x)\ld x = x$,
that is, \eqnref{str3} holds. Thus all conditions
of Theorem \thmref{strange} hold, and so it follows that
$(S,\cdot,{}')$ is a regular unary semigroup satisfying (I3) and
(I4). What remains is to show that (I5) holds. We are going  to show that ${}'$ fixes idempotents.
Thus let $e\in S$ be an idempotent. We translate \eqnref{ir4} into the
semigroup language as
$x(y'x)'(y'y)' = (x(y'x)'y')'y$. Apply (I4) to the left side
and then replace $y$ with $y'$ and use (I3) to get
\begin{equation}
\eqnlabel{ir-tmp2}
x(yx)'yy' = (x(yx)'y)'y'\,.
\end{equation}
Setting $x = y = e$ in \eqnref{ir-tmp2} and using $e^2 = e$, we get
$ee'ee' = (ee'e)'e'$. Apply (I2) to both sides to get
\begin{equation}
\eqnlabel{ir-tmp3}
ee' = e'e'\,.
\end{equation}
Now take $x = y = e'$ in \eqnref{ir-tmp2}. On the left side, we get
$e'(e'e')'e'e'' \by{ir-tmp3} e'(ee')'e'e'' \byx{(I4)} e'ee'e'e''
\by{ir-tmp3} e'eee'e'' = e'ee'e'' \byx{(I3)} e'ee'e \byx{(I1)} e'e$.
On the right side, we have
$(e'(e'e')'e')'e'' \by{ir-tmp3} (e'(ee')'e')'e'' \byx{(I4)} (e'ee'e')'e''
\byx{(I1)} (e'e')'e'' \by{ir-tmp3} (ee')'e'' \byx{(I4)} ee'e''
\byx{(I3)} ee'e \byx{(I2)} e$. Thus $e'e = e$, and so
$e' = (e'e)' \byx{(I4)} e'e = e$, as claimed.

Now suppose $(S,\cdot,{}')$ is a regular unary semigroup satisfying (I3) and (I5).
Since (I4) necessarily holds, Theorem \thmref{strange} implies that $(S,\ld,\rd)$
satisfies (B1), (B2) and (B3). For \eqnref{ir4}, we compute
\[
(x\rd (y\ld x))\rd (y\ld y) = x(y'x)'(y'y)' = x(y'x)'y'y
= (x(y'x)'y')'y = ((x\rd (y\ld x))\rd y)\ld y\,,
\]
where the third equality follows since $x(y'x)'y'$ is an idempotent.
\end{proof}

Next we consider regular involuted semigroups.

\begin{theorem}
\thmlabel{reginvoluted}
Let $(S,\cdot,{}')$ be a regular involuted unary semigroup.
Then the division bimagma $(S,\ld,\rd)$ satisfies \emph{(B1)}, \emph{(B2)},
\emph{(B3)} and
\begin{equation}
\eqnlabel{reginv1}
(x\rd y)' = y\rd x\,.
\end{equation}

Conversely, let $(S,\ld,\rd)$ be a bimagma satisfying \emph{(B1)}, \emph{(B2)},
\emph{(B3)} and \eqnref{reginv1}.
Then $(S,\cdot,{}')$, with ${}'$ and $\cdot$ defined by \eqnref{reconstruct},
is a regular involuted unary semigroup.
\end{theorem}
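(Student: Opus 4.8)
For the forward direction I would first observe that a regular involuted unary semigroup already satisfies (I3) and (I4): (I3) is part of ``involuted,'' and $(xx')' = x''x' = xx'$, $(x'x)' = x'x'' = x'x$ by (I6) and (I3) give (I4). Hence (B1), (B2), (B3) hold by Theorem \thmref{strange}, and it only remains to check \eqnref{reginv1}. Since $x\rd y = xy'$ by \eqnref{ldrd} (with $x'' = x$ making \eqnref{tech} automatic), we get $(x\rd y)' = (xy')' = y''x' = yx' = y\rd x$ using (I6) and (I3), which is \eqnref{reginv1}.

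The substance is the converse. Suppose $(S,\ld,\rd)$ satisfies (B1), (B2), (B3), \eqnref{reginv1}; recall that throughout $x'$ abbreviates $(x\ld x)\rd x = x\ld(x\rd x)$. My plan is to first extract (I3) cheaply and then invoke the machinery already in place. The key observation is that applying \eqnref{reginv1} twice yields $(a\rd b)'' = (b\rd a)' = a\rd b$ for all $a,b\in S$, so $'$ is an involution on the image of $\rd$; and (B3) says precisely that $\rd$ is surjective, since $x = x\rd(x\ld x)$. Therefore $x'' = x$ for every $x$, i.e.\ (I3) holds. I expect this to be the one step that needs the right idea: it short-circuits the long chain of ``double-prime'' computations carried out in the proof of Theorem \thmref{regular}, and the main pitfall is to try to reprove $x''=x$ by hand rather than reading it off from \eqnref{reginv1} together with the surjectivity of $\rd$ supplied by (B3).

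With (I3) available the remainder is bookkeeping. From (B2) we get $x'\ld y' = x\rd y'' = x\rd y$ and $x'\rd y' = x''\ld y = x\ld y$, which are \eqnref{comp1} and \eqnref{comp2}; hence Theorem \thmref{E-inversive} shows $(S,\cdot,{}')$ is an $E$-inversive unary semigroup satisfying \eqnref{tech}, and then Lemma \lemref{doubleprime} upgrades it to a regular unary semigroup because (I3) holds. Finally, using $xy = x\rd y'$ from \eqnref{reconstruct} and $y'\rd x = y'x'$, identity \eqnref{reginv1} gives $(xy)' = (x\rd y')' = y'\rd x = y'x'$, which is (I6). Thus $(S,\cdot,{}')$ is regular involuted, and the inverse-functor assertion is inherited from Theorem \thmref{E-inversive}. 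Independence of (B1), (B2), (B3), \eqnref{reginv1} I would defer to Proposition \prpref{indep}, in parallel with the treatment of Theorem \thmref{idem-respect}.
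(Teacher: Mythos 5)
Your proposal is correct and follows essentially the same route as the paper: both directions hinge on the same key observations, namely that in the converse (B3) and \eqnref{reginv1} together force $x''=x$ (the paper writes this in one line as $x \byx{(B3)} x\rd (x\ld x) \by{reginv1} [(x\ld x)\rd x]' = x''$, which is exactly your involution-plus-surjectivity argument), and that (I6) then drops out of $(x\rd y')' = y'\rd x$. The only cosmetic difference is that after securing (I3) you route through Theorem \thmref{E-inversive} and Lemma \lemref{doubleprime} rather than citing Theorem \thmref{regular} as the paper does; this merely inlines the closing paragraph of that theorem's proof (and in fact sidesteps having to verify \eqnref{reg2} explicitly).
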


We will defer discussing the independence of the identities (B1), (B2), (B3) and \eqnref{reginv1}
until Proposition \prpref{indep}.

\begin{proof}
Suppose $(S,\ld,\rd)$ satisfies (B1), (B2), (B3) and \eqnref{reginv1}.
Then
$x \byx{(B3)} x\rd (x\ld x) \by{reginv1} [(x\ld x)\rd x]' \by{reconstruct} x''$.
Thus (I3) holds. By Theorem \thmref{regular}, $(S,\cdot,{}')$ is a regular
unary semigroup. Finally $(xy)' \byx{(I3)} (xy'')' \by{ldrd} (x\rd y')'
\by{reginv1} y'\rd x \by{ldrd} y'x'$, so that (I6) holds.

Conversely, if $(S,\cdot,{}')$ is regular and involuted, then (B1), (B2) and (B3) follow
from Theorem \thmref{strange}, while \eqnref{reginv1} is just
$(x\rd y)' \by{ldrd} (xy')' \byx{(I6)} y''x' \byx{(I3)} yx' \by{ldrd} y\rd x$.
\end{proof}

Putting together the identities of the last two results, we have
the following.

\begin{proposition}
\prplabel{indep}
The identities \emph{(B1)}, \emph{(B2)}, \emph{(B3)}, \eqnref{ir4} and
\eqnref{reginv1} are independent.
\end{proposition}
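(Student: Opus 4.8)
The plan is to exhibit, for each of the five identities (B1), (B2), (B3), \eqnref{ir4}, \eqnref{reginv1}, a bimagma $(S,\ld,\rd)$ satisfying the other four but not that one. This is exactly the standard independence argument, and the main work is bookkeeping: five small finite models, verified by hand or (as is common for this kind of result) by a model-finder such as \textsc{Mace4}. To keep the models tractable I would first translate the abstract identities into semigroup terms wherever a model happens to come from a unary semigroup: by Theorems~\thmref{strange} and~\thmref{idem-respect}, a bimagma satisfying (B1), (B2), (B3) together with \eqnref{ir4} is the division bimagma of a regular unary semigroup with idempotent-fixing inverse, and together with \eqnref{reginv1} is the division bimagma of a regular involuted unary semigroup. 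So the "positive" instances can be built from known small semigroups, while the "negative" instances need a genuine bimagma construction.

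The concrete models I would aim for: (i) For the independence of (B1), reuse (or lightly modify) the three-element model from the end of the proof of Theorem~\thmref{strange} that satisfies (B2), (B3), \eqnref{str3} but not (B1) --- since \eqnref{ir4} with $y=x$ forces \eqnref{str3}, I would need to re-check \eqnref{ir4} and \eqnref{reginv1} on that model or adjust it slightly, perhaps by symmetrizing $\ld$ and $\rd$ via $x \ld y = y \rd x$, which automatically gives \eqnref{reginv1}. (ii) For (B2): a small model with $\ld$ and $\rd$ genuinely different; the two-element model $x\rd y = b$ with $x\ld a = a$, $x\ld b = b$ used in Theorem~\thmref{E-inversive} is a natural starting point, though one must check (B3), \eqnref{ir4}, \eqnref{reginv1} and tweak if needed. (iii) For (B3): since dropping (B3) from Theorem~\thmref{idem-respect} should leave room, I would take a regular unary semigroup whose chosen inverse does \emph{not} fix idempotents --- e.g.\ a small left-zero or rectangular band with a perverse choice of ${}'$ --- and verify (B1), (B2), \eqnref{ir4}, \eqnref{reginv1} hold there while (B3) fails. (iv) For \eqnref{ir4}: take a regular involuted unary semigroup that does \emph{not} have an idempotent-fixing inverse --- by the discussion preceding Theorem~\thmref{idem-respect}, (I4) without (I5) is exactly this situation, and the four-element band displayed there (with $0'=1$, $x'=0$ otherwise) is the minimal example; but that band is not involuted, so I would instead look for the smallest involuted regular semigroup failing (I5), or failing that, just a bimagma satisfying (B1), (B2), (B3), \eqnref{reginv1} but not \eqnref{ir4}. (v) For \eqnref{reginv1}: take a regular unary semigroup with idempotent-fixing inverse that is \emph{not} involuted --- e.g.\ a non-commutative inverse-like semigroup, or a band that is not involuted --- giving (B1), (B2), (B3), \eqnref{ir4} but not \eqnref{reginv1}.

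The main obstacle I anticipate is item (iv): \eqnref{ir4} is a long identity and the algebras separating it from \eqnref{reginv1} (involuted but not idempotent-fixing) are less familiar than bands or inverse semigroups, so finding a genuinely small witness --- and certifying that \eqnref{ir4} really fails while all of (B1), (B2), (B3), \eqnref{reginv1} hold --- will take the most care. A secondary concern is that several of these models overlap with models already constructed for earlier theorems, so before searching anew I would audit the earlier two-, three-, and four-element bimagmas against all five identities; quite possibly two or three of the five needed separating examples are already present in the paper and only need the remaining three checks spelled out. In writing up, I would present a compact table of operation tables for each model together with, in each case, the single identity that fails (exhibited by a specific substitution of elements) and a one-line justification that the other four hold, invoking Theorems~\thmref{strange} and~\thmref{idem-respect} whenever a model is recognizably a division bimagma of a unary semigroup already understood.
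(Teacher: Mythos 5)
Your overall strategy is the correct (and only reasonable) one, and it is the same as the paper's: exhibit five finite bimagmas, each satisfying four of the identities and violating the fifth. But as written your proposal contains no proof. Not a single one of the five models is actually specified and verified: every case ends with a conditional ("one must check \ldots and tweak if needed", "I would instead look for \ldots", "quite possibly two or three \ldots are already present"). For an independence statement the explicit models \emph{are} the entire content of the proof, and you have deferred all of them. The paper's proof consists precisely of the five operation tables (two $2$-element models, two $3$-element models, and a $4$-element model for the failure of \eqnref{ir4}); the case you yourself flag as the main obstacle --- separating \eqnref{ir4} from (B1), (B2), (B3), \eqnref{reginv1} --- is exactly the one requiring the $4$-element model, and you leave it entirely open.

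Some of your proposed starting points would also fail on inspection. The $2$-element model from Theorem \thmref{E-inversive} that you suggest recycling for the independence of (B2) (namely $x\rd y = b$, $x\ld a = a$, $x\ld b = b$) does not satisfy (B3): taking $x=a$ gives $a\rd(a\ld a) = a\rd a = b \neq a$, so it cannot witness the independence of (B2) here; the paper needs a $3$-element model for that case. Likewise your plan for the independence of (B3) --- a regular unary semigroup whose inverse fails to fix idempotents --- is internally inconsistent: by the forward direction of Theorem \thmref{idem-respect}, the derivation of \eqnref{ir4} in a division bimagma of a regular unary semigroup uses (I5), so such a semigroup is unlikely to satisfy \eqnref{ir4}; indeed the paper's witness for $\neg$(B3) is a degenerate bimagma that is not the division bimagma of any semigroup. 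To complete the proof you must produce and verify the five concrete tables (by hand or with \textsc{Mace4}), not merely describe where you would look for them.
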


\begin{proof}
On $S = \{a,b\}$, set $a\rd x = a$, $b\rd x = b$
and $x\ld y = x\rd y$ for all $x,y\in S$. This is a model satisfying
(B1), (B2), (B3) and \eqnref{ir4}, but not \eqnref{reginv1}.

On $S = \{a,b\}$, set $x\rd y = b$, $a\ld a = a$,
$a\ld b = b$ and $b\ld x = b$ for all $x,y\in S$. This is a model
satisfying (B1), (B2), \eqnref{ir4}, \eqnref{reginv1} but not (B3).

The pair of tables on the left below give
a model satisfying, respectively, (B2), (B3), \eqnref{ir4}, \eqnref{reginv1} but not (B1),
The tables on the right give
a model satisfying (B1), (B3), \eqnref{ir4}, \eqnref{reginv1} but not (B2).

\begin{table}[htb]  \centering
\begin{tabular}{r|rrr}
$\rd$ & 0 & 1 & 2\\
\hline
    0 & 0 & 2 & 0 \\
    1 & 2 & 1 & 2 \\
    2 & 0 & 2 & 2
\end{tabular}
\quad
\begin{tabular}{r|rrr}
$\ld$ & 0 & 1 & 2\\
\hline
    0 & 0 & 2 & 0 \\
    1 & 2 & 1 & 2 \\
    2 & 0 & 2 & 2
\end{tabular}
\qquad\qquad
\begin{tabular}{r|rrr}
$\rd$ & 0 & 1 & 2\\
\hline
    0 & 0 & 2 & 2 \\
    1 & 2 & 1 & 2 \\
    2 & 2 & 2 & 2
\end{tabular}
\quad
\begin{tabular}{r|rrr}
$\ld$ & 0 & 1 & 2\\
\hline
    0 & 0 & 1 & 2 \\
    1 & 0 & 1 & 2 \\
    2 & 2 & 2 & 2
\end{tabular}
\end{table}

Finally, the following tables give a model satisfying
(B1), (B2), (B3) and \eqnref{reginv1}, but not \eqnref{ir4}.

\begin{table}[htb]
\centering
\begin{tabular}{r|rrrr}
$\rd$ & 0 & 1 & 2 & 3\\
\hline
    0 & 0 & 2 & 0 & 2 \\
    1 & 3 & 1 & 3 & 1 \\
    2 & 0 & 2 & 0 & 2 \\
    3 & 3 & 1 & 3 & 1
\end{tabular} \hspace{.5cm}
\begin{tabular}{r|rrrr}
$\ld$ & 0 & 1 & 2 & 3\\
\hline
    0 & 0 & 2 & 2 & 0 \\
    1 & 3 & 1 & 1 & 3 \\
    2 & 3 & 1 & 1 & 3 \\
    3 & 0 & 2 & 2 & 0
\end{tabular}
\end{table}
\end{proof}

\begin{corollary}
\corlabel{mixed}
Let $(S,\cdot,{}')$ be a regular involuted unary semigroup satisfying (I5).
Then the division bimagma $(S,\ld,\rd)$ satisfies the independent identities
\emph{(B1)}, \emph{(B2)},
\emph{(B3)}, \eqnref{ir4} and \eqnref{reginv1}.

Conversely, let $(S,\ld,\rd)$ be a bimagma satisfying \emph{(B1)}, \emph{(B2)},
\emph{(B3)}, \eqnref{ir4} and \eqnref{reginv1}.
Then $(S,\cdot,{}')$, with ${}'$ and $\cdot$ defined by \eqnref{reconstruct},
is a regular involuted unary semigroup satisfying (I5).
\end{corollary}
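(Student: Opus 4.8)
The plan is to derive this corollary directly from Theorem~\thmref{idem-respect}, Theorem~\thmref{reginvoluted}, and Proposition~\prpref{indep}, with essentially no new computation: each of the five identities, and each property of the reconstructed unary semigroup, already occurs in one of those results, so the work is entirely a matter of assembling them.

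For the forward implication, suppose $(S,\cdot,{}')$ is a regular involuted unary semigroup satisfying (I5). Since ``involuted'' includes (I3), the hypotheses of Theorem~\thmref{idem-respect} are met, so the division bimagma $(S,\ld,\rd)$ satisfies (B1), (B2), (B3) and \eqnref{ir4}. Since $(S,\cdot,{}')$ is moreover regular involuted, Theorem~\thmref{reginvoluted} supplies \eqnref{reginv1} as well. Hence all five identities hold, and the independence claim is precisely the content of Proposition~\prpref{indep}, so nothing new is needed there.

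For the converse, let $(S,\ld,\rd)$ satisfy (B1), (B2), (B3), \eqnref{ir4} and \eqnref{reginv1}, and define ${}'$ and $\cdot$ by \eqnref{reconstruct}. Applying Theorem~\thmref{idem-respect} to the sublist (B1), (B2), (B3), \eqnref{ir4} shows that $(S,\cdot,{}')$ is a regular unary semigroup satisfying (I3) and (I5); applying Theorem~\thmref{reginvoluted} to the sublist (B1), (B2), (B3), \eqnref{reginv1} shows that this same $(S,\cdot,{}')$ is regular involuted. There is no conflict between the two conclusions, since both theorems reconstruct the operations from $(S,\ld,\rd)$ via the identical formulas \eqnref{reconstruct}. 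Thus $(S,\cdot,{}')$ is a regular involuted unary semigroup satisfying (I5), as required.

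I do not anticipate any real obstacle here; the only points to check are bookkeeping—that the hypotheses of the two cited theorems genuinely are subsets of our hypothesis list, and that they operate on the same reconstructed algebra. If desired, one may also record, via Theorem~\thmref{E-inversive}, that the functors \eqnref{ldrd} and \eqnref{reconstruct} remain mutually inverse on this class, just as in the preceding results.
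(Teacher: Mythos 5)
Your proposal is correct and matches the paper's intent exactly: the corollary is stated without proof there precisely because it follows by combining Theorem \thmref{idem-respect} and Theorem \thmref{reginvoluted} (applied to the appropriate sublists of identities, with the same reconstruction \eqnref{reconstruct}) together with Proposition \prpref{indep} for independence. No further comment is needed.
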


Next we consider inverse semigroups providing a basis different from Tamura's.

\begin{theorem}
\thmlabel{inverse3P}
Let $(S,\cdot,{}')$ be an inverse semigroup.
Then the division bimagma $(S,\ld,\rd)$ satisfies the identities
\emph{(B1)}, \emph{(B2)}, \emph{(B3)} and
\begin{equation}
\eqnlabel{invcase}
(x \ld x) \rd (y\rd y) = (y\rd y) \rd (x\ld x)\,.
\end{equation}

Conversely, let $(S,\ld,\rd)$ be a bimagma satisfying \emph{(B1)}, \emph{(B2)}, \emph{(B3)}
and \eqnref{invcase}. Then $(S,\cdot,{}')$, with ${}'$ and $\cdot$ defined by \eqnref{reconstruct},
is an inverse semigroup.
\end{theorem}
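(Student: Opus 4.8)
The plan splits into the two implications; the forward direction is quick and the converse carries the weight. For the forward direction, note that an inverse semigroup is a regular involuted unary semigroup whose inverse fixes idempotents, so (I1)--(I3), (I5), (I6) and (I8) all hold. By Theorem~\thmref{reginvoluted} the division bimagma $(S,\ld,\rd)$ already satisfies (B1), (B2) and (B3), so only \eqnref{invcase} remains to be checked. I would verify it by rewriting both sides via \eqnref{ldrd}: since $x\ld x = x'x$, $y\rd y = yy'$, and $(yy')' = yy'$ because $yy'$ is an idempotent and ${}'$ fixes idempotents, the left side equals $(x'x)(yy')$ and the right side equals $(yy')(x'x)$; these agree because idempotents commute in an inverse semigroup (equivalently, apply (I8) with $x,y$ replaced by $x',y'$ and use (I3)).

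For the converse, suppose $(S,\ld,\rd)$ satisfies (B1), (B2), (B3) and \eqnref{invcase}. By (B1) and (B2) the structure $(S,\cdot,{}')$ reconstructed via \eqnref{reconstruct} is a unary semigroup, with $x' = (x\ld x)\rd x$. I would try to bring the problem under Theorem~\thmref{strange} by deriving \eqnref{str3}, i.e. $(x\rd x)\ld x = x$; that theorem then yields that $(S,\cdot,{}')$ is a regular unary semigroup satisfying (I3) and (I4), and, the two functors being mutually inverse, $x\ld y = x'y$ and $x\rd y = xy'$. With (I4) one has $(x'x)' = x'x$ and $(yy')' = yy'$, so \eqnref{invcase} translates into $(x'x)(yy') = (yy')(x'x)$; replacing $x$ by $x'$ and $y$ by $y'$ and using (I3) gives (I8). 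From here standard facts about regular semigroups finish the job: a regular semigroup whose idempotents commute is an inverse semigroup, the chosen ${}'$ is then forced to be the unique inversion (so (I6) holds as well), and the mutual invertibility of the functors follows as in the earlier theorems. An alternative route for this last stage is to derive \eqnref{reginv1} and \eqnref{ir4} and invoke Corollary~\corref{mixed}, which makes the final identification immediate from the definition of an inverse semigroup, at the cost of proving two identities rather than one.

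The step I expect to be the main obstacle is the derivation of \eqnref{str3} from (B1), (B2), (B3) and \eqnref{invcase}: this is the one place where \eqnref{invcase} must be used in an essential way, and I anticipate an equational chase in the spirit of (though hopefully shorter than) the one in the converse of Theorem~\thmref{regular}. A convenient reformulation of the target: using only (B1) and (B3) one gets $(x\ld x)\rd(x\ld x) = x\ld x$ (apply (B1), then (B3)), and \eqnref{reconstruct} then gives $(x\ld x)' = (x\ld x)\ld(x\ld x)$; hence \eqnref{str3} is equivalent to the assertion that $x\ld x$ (and dually $x\rd x$) is fixed by ${}'$, and the role of \eqnref{invcase} is precisely to force this. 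I would look for a sequence of substitutions in \eqnref{invcase} — plausibly starting from the specialization $y = x$ and then using (B2) to slide ${}'$ across $\ld$ and $\rd$ — that collapses $(x\ld x)\ld(x\ld x)$ back to $x\ld x$. Once \eqnref{str3} (equivalently this fixedness, plus \eqnref{reg2} and Theorem~\thmref{regular}, plus (I4)) is secured, the remainder is the routine semigroup bookkeeping described above.
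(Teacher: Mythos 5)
Your forward direction is complete and agrees with the paper's: Theorem \thmref{reginvoluted} supplies (B1)--(B3), and \eqnref{invcase} unpacks to $(x'x)(yy')=(yy')(x'x)$, which is (I8) after the substitution $x\mapsto x'$, $y\mapsto y'$ and (I3). The architecture of your converse is also viable: once ${}'$ is known to fix $x\ld x$ and $x\rd x$, one gets \eqnref{reg2} in two steps ($x'\ld (x\ld x) \byx{(B2)} x\rd (x\ld x)' = x\rd (x\ld x) \byx{(B3)} x$), Theorem \thmref{regular} then gives a regular unary semigroup with (I3), the fixedness becomes (I4), \eqnref{invcase} becomes (I8), and the standard fact that (I2), (I3), (I8) characterize inverse semigroups finishes the argument. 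But the step you explicitly leave open --- deriving the fixedness of $x\ld x$ and $x\rd x$ under ${}'$ (equivalently \eqnref{str3}) from (B1), (B2), (B3) and \eqnref{invcase} --- is precisely the non-routine equational content of the theorem, so as written the proposal has a genuine hole at its crux. Moreover the substitution you propose ($y=x$ in \eqnref{invcase}) is not the right one: it yields $(x\ld x)\rd (x\rd x)=(x\rd x)\rd (x\ld x)$, which does not collapse. The instance that works is $x\mapsto x\rd x$, $y\mapsto x$: the left-hand side $((x\rd x)\ld (x\rd x))\rd (x\rd x)$ is exactly $(x\rd x)'$ by \eqnref{reconstruct}, while the right-hand side $(x\rd x)\rd ((x\rd x)\ld (x\rd x))$ equals $x\rd x$ by (B3); hence $(x\rd x)'=x\rd x$. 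Then $(x\ld x)'=x\ld x$ follows from your own observation that $(x\ld x)\rd (x\ld x)=x\ld x$ together with the instance $((x\ld x)\rd (x\ld x))'=(x\ld x)\rd (x\ld x)$ of the identity just proved.

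With that lemma supplied, your route closes and is in fact a little shorter than the paper's. The paper proves the same two fixedness identities but then, instead of passing immediately to \eqnref{reg2}, establishes (I3) directly through a longer chain of auxiliary identities (expressions for $x''$, the identity $x'''=x''$, etc.) before invoking Theorem \thmref{regular}; it never routes through Theorem \thmref{strange} or \eqnref{str3}. One final caution: what the translation of \eqnref{invcase} gives you is (I8), i.e.\ commutation of the special idempotents $xx'$ and $y'y$, not a priori commutation of all idempotents, so the fact to cite is the one the paper uses --- that (I2), (I3), (I8) imply the structure is an inverse semigroup with ${}'$ the natural inversion --- rather than ``a regular semigroup with commuting idempotents is inverse.''
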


We defer discussing the independence of (B1), (B2), (B3) and \eqnref{invcase} until
Proposition \prpref{cliffindep}.

\begin{proof}
Now suppose $(S,\ld,\rd)$ satisfies (B1), (B2), (B3) and \eqnref{invcase}.
Firstly, we compute
\begin{equation}
\eqnlabel{lem12}
(x\ld x)\rd (x\ld x) \byx{(B1)} x\ld [x\rd (x\ld x)] \byx{(B3)} x\ld x\,.
\end{equation}

Next, we show
\begin{align}
(x\rd x)' &= x\rd x \eqnlabel{lem21} \\
\intertext{and}
(x\ld x)' &= x\ld x\,. \eqnlabel{lem26}
\end{align}
Indeed, we have
\[
(x\rd x)' \by{reconstruct}
[(x\rd x)\ld (x\rd x)]\rd (x\rd x) \by{invcase}
(x\rd x)\rd  [(x\rd x)\ld (x\rd x)] \byx{(B3)} x\rd x\,,
\]
and
\[
(x\ld x)' \byx{(B3)} [x\ld (x\rd (x\ld x))]'
\byx{(B1)} [(x\ld x)\rd (x\ld x)] \by{lem12} x\ld x\,.
\]

Next, we have
\[
x'' \eqnref{reconstruct} (x'\ld x')\rd x' \byx{(B2)} (x'\ld x')'\ld x
\by{lem26} (x'\ld x')\ld x \byx{(B2)} (x\rd x'')\ld x\,,
\]
that is,
\begin{equation}
\eqnlabel{lem31}
x'' = (x\rd x'')\ld x\,.
\end{equation}
This gives us
\[
x'' \by{lem31} (x\rd x'')\ld x \byx{(B3)} (x\rd x'')\ld (x\rd (x\ld x))
\byx{(B1)} [(x\rd x'')\ld x]\rd (x\ld x) \by{lem31} x''\rd (x\ld x)\,,
\]
that is,
\begin{equation}
\eqnlabel{lem32}
x'' = x''\rd (x\ld x)\,.
\end{equation}

For the next step, we show
\begin{equation}
\eqnlabel{lem35}
x''\rd x'' = x'\ld x'\,.
\end{equation}
Indeed, we have
\begin{alignat*}{2}
x''\rd x'' &\byx{(B3)} x''\rd (x'\rd (x'\ld x'))' &&\byx{(B2)} x''\rd (x'\rd (x\rd x''))' \\
&\byx{(B2)} x'''\ld (x'\rd (x\rd x'')) &&\byx{(B1)} (x'''\ld x')\rd (x\rd x'') \\
&\byx{(B2)} (x''\rd x'')\rd (x\rd x'') &&\by{lem31} ([(x\rd x'')\ld x]\rd x'')\rd (x\rd x'') \\
&\byx{(B1)} [(x\rd x'')\ld (x\rd x'')]\rd (x\rd x'') &&\by{reconstruct} (x\rd x'')' \\
&\byx{(B2)} (x'\ld x')' &&\by{lem26} x'\ld x'\,.
\end{alignat*}

Now
\[
x''' \eqnref{reconstruct} x''\ld (x''\rd x'') \eqnref{lem35} x''\ld (x'\ld x')
\byx{(B2)} x'\rd (x'\ld x')' \by{lem21} x'\rd (x'\ld x') \byx{(B3)} x'\,,
\]
that is,
\begin{equation}
\eqnlabel{lem36}
x''' = x''\,.
\end{equation}

And now we can verify (I3) as follows:
\begin{alignat*}{4}
x'' &\by{lem32} x''\rd (x\ld x) &&\by{lem26} x''\rd (x\ld x)'
&&\byx{(B2)} x'''\ld (x\ld x) &&\by{lem36} x'\ld (x\ld x) \\
&\byx{(B2)} x\rd (x\ld x)' &&\by{lem26} x\rd (x\ld x)
&&\byx{(B3)} x\,. &&
\end{alignat*}
It follows from Theorem \thmref{regular} that $(S,\cdot,{}')$ is a regular
unary semigroup satisfying (I3). Thus we have
$(x'x)' = (x\ld x)' \by{lem26} x\ld x = x'x$ and
$(xx')' = (x\rd x)' \by{lem21} x\rd x = xx'$, so that (I4) holds.

Finally, we compute
\begin{align*}
(xx')(y'y) &\byx{(I4)} (xx')(y'y)' \by{reconstruct} (x\rd x)\rd (y\ld y)
\by{invcase} (y\ld y)\rd (x\rd x) \\
&\by{reconstruct} (y'y)'(xx') \byx{(I4)} (y'y)(xx')\,,
\end{align*}
and so (I8) holds. Finally, it is well-known that
(I2), (I3) and (I8) are sufficient to imply that a unary semigroup is an
inverse semigroup in which the unary operation is the natural inverse; the
identity (I6) is, in fact, dependent (see \emph{e.g.}, \cite{AM}).

Conversely, if $(S,\cdot,{}')$ is an inverse semigroup, then (B1), (B2) and (B3) follow from
Theorem \thmref{reginvoluted}. For \eqnref{invcase}, we have
$(x\ld x)\rd (y\rd y) \by{ldrd} (x'x)(yy')' \byx{(I4)} (x'x)(yy') \byx{(I8)}
(yy')(x'x) \byx{(I4)} (yy')(x'x)' \by{ldrd} (y\rd y)\rd (x\ld x)$.
\end{proof}

Finally, we turn to completely regular semigroups.

\begin{theorem}
\thmlabel{involuted}
Let $(S,\cdot,{}')$ be a completely regular semigroup.
Then the division bimagma $(S,\ld,\rd)$ satisfies the identities
\emph{(B1)}, \emph{(B2)}, \emph{(B3)} and
\begin{equation}
\eqnlabel{cr4}
x\rd x = x\ld x\,.
\end{equation}

Conversely, let $(S,\ld,\rd)$ be a bimagma satisfying \emph{(B1)}, \emph{(B2)}, \emph{(B3)}
and \eqnref{cr4}. Then $(S,\cdot,{}')$, with ${}'$ and $\cdot$ defined by \eqnref{reconstruct},
is a completely regular semigroup.
\end{theorem}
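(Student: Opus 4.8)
The plan is to treat the two implications separately: the forward direction follows at once from results already proved, and the converse will be obtained by deriving \eqnref{reg2} and then invoking Theorem~\thmref{regular}.

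For the forward direction, recall that a completely regular semigroup is a regular unary semigroup satisfying (I7) whose unary operation is the inverse of each element inside the maximal subgroup containing it; in particular that operation fixes idempotents and satisfies $x''=x$, so (I3) and (I4) hold. Hence Theorem~\thmref{strange} applies and yields (B1), (B2) and (B3), while \eqnref{cr4} is immediate: by \eqnref{ldrd} we have $x\rd x = xx'$ and $x\ld x = x'x$, and these coincide by (I7).

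For the converse, suppose $(S,\ld,\rd)$ satisfies (B1), (B2), (B3) and \eqnref{cr4}; the target is \eqnref{reg2}, i.e.\ $x'\ld(x\ld x)=x$. The key preliminary step is to establish $(x\rd x)'=x\rd x$. First, (B1) and then (B3) give $(x\ld x)\rd(x\ld x) \byx{(B1)} x\ld(x\rd(x\ld x)) \byx{(B3)} x\ld x$. Substituting $x\ld x$ for $x$ in \eqnref{cr4} upgrades this to $(x\ld x)\ld(x\ld x)=x\ld x$, and rewriting $x\rd x$ as $x\ld x$ via \eqnref{cr4} gives the analogous statements with $\ld$ replaced by $\rd$: $(x\rd x)\rd(x\rd x)=x\rd x$ and $(x\rd x)\ld(x\rd x)=x\rd x$. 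Plugging these into the instance $(x\rd x)' = (x\rd x)\ld\big((x\rd x)\rd(x\rd x)\big)$ of \eqnref{reconstruct} yields $(x\rd x)'=x\rd x$ as claimed.

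With this in hand, \eqnref{reg2} follows from the short chain
\[
x'\ld(x\ld x) \by{cr4} x'\ld(x\rd x) \byx{(B2)} x\rd(x\rd x)' = x\rd(x\rd x) \by{cr4} x\rd(x\ld x) \byx{(B3)} x,
\]
the middle equality being $(x\rd x)'=x\rd x$. Now Theorem~\thmref{regular} applies: $(S,\cdot,{}')$ is a regular unary semigroup satisfying (I3) and the functors \eqnref{ldrd}, \eqnref{reconstruct} are mutually inverse. In particular $x\rd x = xx'$ and $x\ld x = x'x$, so \eqnref{cr4} says precisely $xx'=x'x$, i.e.\ (I7); hence $(S,\cdot,{}')$ is completely regular and the two functors are inverse. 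The only step I expect to require any thought is the bootstrap identity $(x\rd x)'=x\rd x$, and there the crucial observation is that substituting $x\ld x$ for $x$ in \eqnref{cr4} is exactly what converts the readily available $(x\ld x)\rd(x\ld x)=x\ld x$ into $(x\ld x)\ld(x\ld x)=x\ld x$; everything after that is routine rewriting. Independence of (B1), (B2), (B3) and \eqnref{cr4} is not asserted in the statement, and, following the pattern of Theorem~\thmref{inverse3P}, would be deferred to the independence proposition covering the inverse, completely regular and Clifford cases together.
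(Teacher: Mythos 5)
Your proposal is correct and follows essentially the same route as the paper: derive that ${}'$ fixes $x\rd x$ (the paper uses the mirror fact $(x\ld x)'=x\ld x$, obtained by the same (B1)/(B3)/\eqnref{cr4} rewriting), deduce \eqnref{reg2} via (B2), invoke Theorem~\thmref{regular}, and read off (I7) from \eqnref{cr4}; the forward direction likewise goes through Theorem~\thmref{strange}. The differences are only cosmetic.
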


We defer discussing the independence of (B1), (B2), (B3) and \eqnref{cr4} until
Proposition \prpref{cliffindep}.

\begin{proof}
Suppose $(S,\ld,\rd)$ satisfies (B1), (B2), (B3) and \eqnref{cr4}.
Firstly, we have
\begin{equation}
\eqnlabel{cr-tmp1}
\begin{alignedat}{2}
(x\ld x)' &\by{reconstruct} (x\ld x)\ld ((x\ld x)\rd (x\ld x))
&&\byx{(B1)} (x\ld x)\ld (x\ld (x\rd (x\ld x))) \\
&\byx{(B3)} (x\ld x)\ld (x\ld x)
&&\by{cr4} (x\ld x)\rd (x\ld x) \\
&\byx{(B1)} x\ld (x\rd (x\ld x))
&&\byx{(B3)} x\ld x\,.
\end{alignedat}
\end{equation}
Thus
$x'\ld (x\ld x) \byx{(B2)} x\rd (x\ld x)' \by{cr-tmp1} x\rd (x\ld x)
\byx{(B3)} x$ and hence \eqnref{reg2} holds. Therefore the conditions of
Theorem \thmref{regular} are satisfied, and so $(S,\cdot,{}')$
is a regular semigroup satisfying (I3). In addition,
$xx' = x\rd x \by{cr4} x\ld x = x'x$ and so (I7) holds. Therefore
$(S,\cdot,{}')$ is completely regular.

Conversely, suppose $(S,\cdot,{}')$ is a completely regular semigroup. Then
(B1), (B2) and (B3) hold by Theorem \thmref{strange}, while
\eqnref{cr4} is just (I7) rewritten.
\end{proof}

\begin{proposition}
\prplabel{cliffindep}
The identities \emph{(B1)}, \emph{(B2)}, \emph{(B3)}, \eqnref{invcase} and \eqnref{cr4}
are independent.
\end{proposition}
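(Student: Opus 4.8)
The plan is to produce, for each of the five identities, a bimagma $(S,\ld,\rd)$ satisfying the other four but failing that one. Three of these witnesses are available almost immediately from the proof of Proposition~\prpref{indep}. The key observation is that in each of the three models constructed there---the two-element model with $a\rd x=a$, $b\rd x=b$, $x\ld y=x\rd y$, and the two three-element models given by the ``left'' and ``right'' pairs of tables---one has $x\ld x = x = x\rd x$ for every $x$. Consequently \eqnref{cr4} holds in all three for free, and \eqnref{invcase} collapses to the assertion that $\rd$ is commutative, which is read straight off the tables. Thus the two-element model satisfies (B1), (B2), (B3), \eqnref{cr4} but not \eqnref{invcase} (because $a\rd b = a \neq b = b\rd a$); the ``left'' three-element model, having $\rd$ commutative, satisfies (B2), (B3), \eqnref{invcase}, \eqnref{cr4} but not (B1); and the ``right'' three-element model, likewise with $\rd$ commutative, satisfies (B1), (B3), \eqnref{invcase}, \eqnref{cr4} but not (B2). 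In the last two cases the statements about (B1), (B2) and (B3) are exactly those already checked in Proposition~\prpref{indep}.

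For the independence of (B3) I would take the constant bimagma on $S=\{a,b\}$ with $x\rd y=x\ld y=a$ for all $x,y$. Then $x'=(x\ld x)\rd x=a$ and every composite of $\ld$ and $\rd$ over $S$ equals $a$, so (B1), (B2), \eqnref{invcase} and \eqnref{cr4} hold for the most trivial reasons, while $b\rd(b\ld b)=b\rd a=a\neq b$, so (B3) fails.

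The only case carrying real content is the independence of \eqnref{cr4}, and here a tiny hand-built table is out of the question. By Theorem~\thmref{inverse3P}, every bimagma satisfying (B1), (B2), (B3) and \eqnref{invcase} is the division bimagma of an inverse semigroup, and in that translation \eqnref{cr4} is precisely the identity $xx'=x'x$, that is, (I7); so a witness must be a non-completely-regular inverse semigroup. As every inverse semigroup of order at most four is a Clifford semigroup, the natural choice is the five-element Brandt semigroup $B_2=\{0,e_{11},e_{12},e_{21},e_{22}\}$, with $e_{ij}e_{kl}=e_{il}$ if $j=k$ and $e_{ij}e_{kl}=0$ otherwise, $0$ a zero, and unary operation $(e_{ij})'=e_{ji}$, $0'=0$. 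Since $B_2$ is an inverse semigroup, its division bimagma satisfies (B1), (B2), (B3) and \eqnref{invcase} by Theorem~\thmref{inverse3P}; but $e_{12}\rd e_{12}=e_{12}e_{21}=e_{11}$ whereas $e_{12}\ld e_{12}=e_{21}e_{12}=e_{22}$, and $e_{11}\neq e_{22}$, so \eqnref{cr4} fails.

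I expect this last case to be the main (indeed the only) obstacle: not a calculation, but the recognition that one cannot stay with ad hoc two- or three-element tables and must instead call on a genuine non-completely-regular inverse semigroup, after which Theorem~\thmref{inverse3P} supplies the four positive identities automatically. The remaining four cases reduce to re-inspecting the models already built for Proposition~\prpref{indep} and the one-line constant model above.
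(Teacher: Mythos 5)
Your proposal is correct and takes essentially the same approach as the paper: five explicit countermodels, one for each identity. In fact your Brandt semigroup $B_2$ is precisely the paper's five-element table for the failure of \eqnref{cr4} (the paper just writes out its division bimagma directly rather than invoking Theorem \thmref{inverse3P}), and your remaining witnesses are either the paper's own or harmless variants recycled from Proposition \prpref{indep}.
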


\begin{proof}
On $S = \{a,b\}$, define $a\rd x = a$, $b\rd x = b$
and $x\ld y = x\rd y$ for all $x,y\in S$. This is a model satisfying
(B1), (B2), (B3) and \eqnref{cr4}, but not \eqnref{invcase}.

On $S = \{a,b\}$, define $x\rd y = x\ld y = b$
for all $x,y\in S$. This gives a model satisfying
(B1), (B2), \eqnref{invcase} and \eqnref{cr4}, but not (B3).

On $S = \{a,b\}$, define $a\rd a = b$, $a\rd b = a$,
$b\rd x = b$, $x\ld y = b$ for all $x,y\in S$. This gives a model satisfying
(B1), (B3), \eqnref{invcase} and \eqnref{cr4}, but not (B2).

On $S = \{a,b\}$, define $a\rd a = b$, $a\rd b = a$,
$b\rd x = b$ and $x\ld y = x\rd y$ for all $x,y\in S$. This gives a model satisfying
(B2), (B3), \eqnref{invcase} and \eqnref{cr4}, but not (B1).

Finally, the table below gives a model satisfying (B1), (B2), (B3) and \eqnref{invcase},
but not \eqnref{cr4}.

\begin{table}[htb]  \centering
\begin{tabular}{r|rrrrr}
$\rd$ & 0 & 1 & 2 & 3 & 4\\
\hline
    0 & 2 & 4 & 4 & 0 & 4 \\
    1 & 4 & 3 & 1 & 4 & 4 \\
    2 & 4 & 0 & 2 & 4 & 4 \\
    3 & 1 & 4 & 4 & 3 & 4 \\
    4 & 4 & 4 & 4 & 4 & 4
\end{tabular} \qquad
\begin{tabular}{r|rrrrr}
$\ld$ & 0 & 1 & 2 & 3 & 4\\
\hline
    0 & 3 & 4 & 1 & 4 & 4 \\
    1 & 4 & 2 & 4 & 0 & 4 \\
    2 & 0 & 4 & 2 & 4 & 4 \\
    3 & 4 & 1 & 4 & 3 & 4 \\
    4 & 4 & 4 & 4 & 4 & 4
\end{tabular}
\end{table}
\end{proof}

\begin{corollary}
Let $(S,\cdot,{}')$ be a Clifford semigroup.
Then the division bimagma $(S,\ld,\rd)$ satisfies the independent identities
\emph{(B1)}, \emph{(B2)}, \emph{(B3)}, \eqnref{invcase} and \eqnref{cr4}.

Conversely, let $(S,\ld,\rd)$ be a bimagma satisfying \emph{(B1)}, \emph{(B2)}, \emph{(B3)},
\eqnref{invcase} and \eqnref{cr4}. Then $(S,\cdot,{}')$, with ${}'$ and $\cdot$ defined by
\eqnref{reconstruct}, is a Clifford semigroup.
\end{corollary}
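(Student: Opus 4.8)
The plan is to observe that this corollary is nothing more than the intersection of the two preceding characterizations, since by definition a Clifford semigroup is precisely a semigroup that is simultaneously inverse and completely regular. Concretely, I would combine Theorem~\thmref{inverse3P}, Theorem~\thmref{involuted}, and Proposition~\prpref{cliffindep}. The key point that makes this work cleanly is that both of those theorems use the \emph{same} reconstruction functor \eqnref{reconstruct} and both rest on the common core (B1), (B2), (B3); the only extra identity needed for ``inverse'' is \eqnref{invcase} and the only extra identity needed for ``completely regular'' is \eqnref{cr4}.

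For the forward direction, suppose $(S,\cdot,{}')$ is a Clifford semigroup. Then it is in particular an inverse semigroup, so Theorem~\thmref{inverse3P} shows that its division bimagma $(S,\ld,\rd)$ satisfies (B1), (B2), (B3) and \eqnref{invcase}. It is also completely regular, so Theorem~\thmref{involuted} shows that $(S,\ld,\rd)$ satisfies (B1), (B2), (B3) and \eqnref{cr4}. Taking the union of the two lists, $(S,\ld,\rd)$ satisfies (B1), (B2), (B3), \eqnref{invcase} and \eqnref{cr4}.

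For the converse, suppose $(S,\ld,\rd)$ is a bimagma satisfying (B1), (B2), (B3), \eqnref{invcase} and \eqnref{cr4}, and let $(S,\cdot,{}')$ be obtained from \eqnref{reconstruct}. Since (B1), (B2), (B3) and \eqnref{invcase} hold, Theorem~\thmref{inverse3P} tells us $(S,\cdot,{}')$ is an inverse semigroup; since (B1), (B2), (B3) and \eqnref{cr4} hold, Theorem~\thmref{involuted} tells us $(S,\cdot,{}')$ is completely regular. Being both, $(S,\cdot,{}')$ is a Clifford semigroup. The assertion that the functors \eqnref{ldrd} and \eqnref{reconstruct} are mutually inverse is inherited directly from either Theorem~\thmref{inverse3P} or Theorem~\thmref{involuted}, and the independence of (B1), (B2), (B3), \eqnref{invcase} and \eqnref{cr4} is exactly the content of Proposition~\prpref{cliffindep}.

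I do not expect any real obstacle here: all of the substantive calculations were carried out in Theorems~\thmref{inverse3P} and~\thmref{involuted} and in the model constructions of Proposition~\prpref{cliffindep}. The only thing one must be slightly careful about is to note that ``Clifford = inverse and completely regular'' permits simply merging the two axiom systems, which is legitimate precisely because the two theorems share the reconstruction functor and the core identities (B1), (B2), (B3); hence no reconciliation of the two reconstructed structures $(S,\cdot,{}')$ is needed.
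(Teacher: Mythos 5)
Your proposal is correct and matches the paper's approach exactly: the paper states this corollary without a separate proof precisely because, as you observe, it is the immediate conjunction of Theorem \thmref{inverse3P}, Theorem \thmref{involuted}, and Proposition \prpref{cliffindep}, using the definition of a Clifford semigroup as a completely regular inverse semigroup. Your remark that the merging of axiom systems is legitimate because both theorems share the core (B1), (B2), (B3) and the same reconstruction \eqnref{reconstruct} is exactly the right point to make.
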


\section{Tamura's Problem}
\seclabel{Tamura}

Now we turn the problem that arose from Tamura's work \cite{Tamura}. We
repeat here the relevant identities for the reader's convenience:
\begin{align*}
(x \rd y) \ld z &= y \rd (z \ld x) & \text{(T1)} &&
(x \rd x) \ld x &= x               & \text{(T2)} \\
x\rd y' &= x'\ld y & \text{(T3)} &&
(x\rd y)' &= y\rd x & \text{(T4)}
\end{align*}
There is a tacit assumption here that ${}'$ is well-defined, and we
address this now along with other useful facts.

\begin{lemma}
\lemlabel{joint}
Let $(S,\ld,\rd)$ be a bimagma satisfying \emph{(T1)} and \emph{(T2)}.
Then the identities
\begin{align}
x\rd (x\ld x) &= x \eqnlabel{t2a} \\
x\ld (x\rd x) &= (x\ld x)\rd x \eqnlabel{same}
\end{align}
hold. Define ${}' : S\to S$ by \eqnref{reconstruct}. Then the following
identities also hold:
\begin{align}
(x\ld x)' &= x\ld x \eqnlabel{t4-goal1} \\
(x\rd x)' &= x\rd x\,. \eqnlabel{t4-goal2}
\end{align}
\end{lemma}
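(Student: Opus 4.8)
The plan is to establish the four displayed identities \eqnref{t2a}, \eqnref{same}, \eqnref{t4-goal1}, \eqnref{t4-goal2} in exactly that order, using nothing beyond (T1) and (T2); each step is a single well-chosen substitution into one of the two axioms, so the work lies in spotting the instantiations rather than in any lengthy computation.

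First I would prove \eqnref{t2a}: putting $y = z = x$ in (T1) gives $(x\rd x)\ld x = x\rd(x\ld x)$, and the left-hand side is $x$ by (T2). Next, for \eqnref{same} I would start from $x\ld(x\rd x)$ and use \eqnref{t2a} to rewrite the leading $x$ as $x\rd(x\ld x)$, so the expression becomes $(x\rd(x\ld x))\ld(x\rd x)$; this now has the form $(u\rd v)\ld w$ on which (T1) acts, turning it into $(x\ld x)\rd((x\rd x)\ld x)$, and the inner factor $(x\rd x)\ld x$ equals $x$ by (T2), leaving $(x\ld x)\rd x$ as required. With \eqnref{same} in hand the two expressions for $x'$ in \eqnref{reconstruct} coincide, so ${}'$ is well defined.

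For the last two identities I would first record four ``idempotency'' facts, to the effect that each of $x\ld x$ and $x\rd x$ is fixed under both $\rd$ and $\ld$ against itself:
\begin{align*}
(x\ld x)\rd(x\ld x) &= x\ld x, & (x\ld x)\ld(x\ld x) &= x\ld x, \\
(x\rd x)\ld(x\rd x) &= x\rd x, & (x\rd x)\rd(x\rd x) &= x\rd x.
\end{align*}
The first comes from (T1) with $y = x\ld x$, $z = x$ together with \eqnref{t2a}; the third from (T1) with $y = x$, $z = x\rd x$ together with (T2); and the remaining two follow by substituting $x\ld x$, respectively $x\rd x$, into all three variable slots of (T1) and then invoking the corresponding one of the first two facts together with \eqnref{t2a}, respectively (T2). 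Then \eqnref{t4-goal1} is obtained by expanding $(x\ld x)' = (x\ld x)\ld\bigl((x\ld x)\rd(x\ld x)\bigr)$ via \eqnref{reconstruct} and applying the first and then the second of the displayed facts; and \eqnref{t4-goal2} by expanding $(x\rd x)' = \bigl((x\rd x)\ld(x\rd x)\bigr)\rd(x\rd x)$ and applying the third and then the fourth.

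The only place that calls for any thought is the derivation of \eqnref{same}: one has to notice that it is \eqnref{t2a}, rather than (T2), that puts the left-hand side into the pattern on which (T1) operates. Everything else is a direct instantiation of the two axioms, so no real obstacle arises.
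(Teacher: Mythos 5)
Your proof is correct and follows essentially the same route as the paper's: the derivations of \eqnref{t2a} and \eqnref{same} are step-for-step the ones given there, and your four idempotency facts for $x\ld x$ and $x\rd x$ are exactly what the paper's inline chain for \eqnref{t4-goal1} (and its dual for \eqnref{t4-goal2}) establishes implicitly. The only difference is organizational --- you isolate those facts as named identities rather than threading them through a single computation --- which if anything makes the argument easier to check.
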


(Note that \eqnref{t2a} is just (B3).)

\begin{proof}
For \eqnref{t2a}, we have $x\rd (x\ld x) \byx{(T1)} (x\rd x)\ld x\byx{(T2)} x$.
Thus we obtain \eqnref{same} as follows:
\[
x\ld (x\rd x) \by{t2a} (x\rd (x\ld x))\ld (x\rd x)
\byx{(T1)} (x\ld x)\rd (\underbrace{(x\rd x)\ld x}) \byx{(T2)} (x\ld x)\rd x\,.
\]
For \eqnref{t4-goal1}, we compute
\begin{alignat*}{2}
(x\ld x)' &\by{reconstruct} (x\ld x)\ld (\underbrace{(x\ld x)\rd (x\ld x)})
&&\byx{(T1)} (x\ld x)\ld ((\underbrace{x\rd (x\ld x)})\ld x) \\
&\by{t2a} (x\ld x)\ld (x\ld x)
&&\by{t2a} (\underbrace{(x\rd (x\ld x))\ld x})\ld (x\ld x) \\
&\byx{(T1)} ((x\ld x)\rd (x\ld x))\ld (x\ld x)
&&\byx{(T2)} (x\ld x)\,.
\end{alignat*}
The proof of \eqnref{t4-goal2} is dual to this.
\end{proof}

With this lemma in place, we may now use the shorthand
$x' = x\ld (x\rd x) = (x\ld x)\rd x$ from \eqnref{reconstruct}.

Our first goal in this section is to give a humanly readable proof
of Proposition \prpref{AM}, which we restate here.

\begin{proposition}
\prplabel{main}
Let $(S,\ld,\rd)$ be a bimagma satisfying \emph{(T1)}, \emph{(T2)}
and \emph{(T3)}. Then \emph{(T4)} also holds.
\end{proposition}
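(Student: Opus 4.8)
The plan is to compute $(x\rd y)'$ straight from the reconstruction formula $z' = z\ld(z\rd z)$ and collapse the result to $y\rd x$, with (T1) doing the heavy lifting. I would begin by collecting the facts already supplied by Lemma~\lemref{joint} — namely (B3), the two expressions $x' = x\ld(x\rd x) = (x\ld x)\rd x$, and $(x\ld x)' = x\ld x$, $(x\rd x)' = x\rd x$ — and by recording the substitution instances of (T3) that get used repeatedly, such as $x\rd x' = x'\ld x$ (put $y:=x$) and $x'\rd x' = x''\ld x$ (put $x:=x'$, $y:=x$), together with their left--right duals. These let one trade a $\rd$ for a $\ld$ on primed arguments.

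The key first move is the clean reduction
\[
(x\rd y)' = (x\rd y)\ld\big((x\rd y)\rd(x\rd y)\big) \byx{(T1)} y\rd\Big(\big((x\rd y)\rd(x\rd y)\big)\ld x\Big),
\]
obtained by taking $x\rd y$ itself in the role of ``$x\rd y$'' in (T1). Thus it remains to prove $y\rd w = y\rd x$, where $w := \big((x\rd y)\rd(x\rd y)\big)\ld x$. Here I would apply (T1) repeatedly, using (B3) to rewrite subterms such as $x$ as $x\rd(x\ld x)$ so that the pattern ``$(\,\cdot\,\rd\,\cdot\,)\ld\,\cdot$'' needed to fire (T1) is actually exposed; in the semigroup picture ($x\rd y$ behaves like $xy'$ and $x\ld y$ like $x'y$), the term $w$ should reduce to the one behaving like $xy'yx'x$, and $y\rd$ of it behaves like $(yx')(yx')'(yx') = yx' = y\rd x$, so one has a concrete target to steer the rewriting toward.

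The genuine obstacle is that the final simplification requires $x'' = x$ — and the consequent rules $x''' = x'$, the unrestricted two-way use of (T3), and the involution-like behaviour of ${}'$ — and, unlike in Theorem~\thmref{reginvoluted}, we cannot get $x'' = x$ cheaply from (T4), since (T4) is exactly what we are after. So the substantive preliminary work is to derive $x'' = x$ from (T1), (T2), (T3) alone. I would do this by a chain of identities: express $x'\rd x'$ and $x'\ld x'$ via the substituted forms of (T3) as $x''\ld x$ and $x\rd x''$, then use (T1) together with (B3) to push these ``double-primed idempotents'' back down onto $x\rd x$ and $x\ld x$, and finally feed the outcome into the reconstruction of $x''$; a natural organizing by-product along the way is likely that (B1) holds, so that $(S,\cdot,{}')$ is an honest unary semigroup, after which it is regular and involuted and $(x\rd y)' = (xy')' = yx' = y\rd x$ is immediate. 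I expect this derivation of $x'' = x$ to be the hard and least transparent part, precisely because (T1) is the only multivariable axiom and must be deployed several times in just the right configuration, with (B3) manufacturing the patterns it needs; everything downstream of $x'' = x$ is routine bookkeeping.
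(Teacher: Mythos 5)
Your overall architecture matches the paper's in its first half: you correctly identify that the substantive preliminary is to derive $x''=x$ from (T1)--(T3) alone (this is exactly the paper's first auxiliary lemma, and your sketch of how to get it --- trading $\rd$ for $\ld$ on primed arguments via substituted instances of (T3), then collapsing with (T1), (T2) and the fixed-point identities of Lemma \lemref{joint} --- is essentially the computation the paper performs). Your opening move on $(x\rd y)'$ via the reconstruction formula and (T1) is also in the right spirit, although the paper first rewrites the outer factor as $(x\rd y)\rd(y\rd y)$ (its identity \eqnref{step65}) precisely so that the inner term produced by (T1) becomes $((x\rd y)\rd(x\rd y))\ld(x\rd y)$ and collapses by (T2); your variant leaves the messier residue $((x\rd y)\rd(x\rd y))\ld x$ with no indicated mechanism for simplifying it.

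The genuine gap is in the second half. You assert that once $x''=x$ is in hand the structure is ``regular and involuted,'' so that $(x\rd y)'=(xy')'=y''x'=y\rd x$ is immediate. But involutedness means (I6), $(xy)'=y'x'$, and by Tamura's theorem (Proposition \prpref{tamura1}) the axioms (T1)--(T4) are exactly what characterize regular involuted unary semigroups --- so claiming the anti-automorphism property of ${}'$ at this stage is tantamount to assuming (T4), which is the very thing to be proved; nothing in (T1)--(T3) together with $x''=x$ hands you (I6) for free. The paper bridges this gap with a second lemma deriving eight further identities, the decisive ones being one-sided surrogates for (T4), namely $x\ld(y\rd z)'=x\ld(z\rd y)$ (\eqnref{step43}) and $(x\rd y)\ld z=(y\rd x)'\rd z$ (\eqnref{step49}), which say that $(y\rd z)'$ and $z\rd y$ agree whenever they sit inside another division; only then can the final computation strip away the surrounding context. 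That bridge is the bulk of the work, and your proposal replaces it with ``routine bookkeeping'' guided by a semigroup heuristic that already presupposes the involution. The same objection applies to your passing claim that (B1) ``is likely a by-product'': it is true in the end, but only because (T4) holds, so it cannot be invoked on the way there without proof.
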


We begin with two auxiliary lemmas.

\begin{lemma}
Under the assumptions of Proposition \prpref{main}, (I3) holds.
\end{lemma}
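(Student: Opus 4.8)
We want to show that a bimagma satisfying (T1), (T2), (T3) satisfies (I3), i.e., $x'' = x$, where $x'$ is the shorthand $x' = x\ld(x\rd x) = (x\ld x)\rd x$ justified by Lemma~\lemref{joint}. The overall strategy is to reduce to a situation where we can invoke a previously established result rather than starting from scratch. Observe that (T3) is exactly (B2). So if we can derive (B1) and (B3) from (T1)--(T3), then Theorem~\thmref{regular} applies the moment we also establish (I3) --- but of course (I3) is precisely what we are after, so that theorem cannot be used directly. Instead the natural route is: Lemma~\lemref{joint} already gives us (B3) (it is \eqnref{t2a}) and the two ``$(x\ld x)$ and $(x\rd x)$ are fixed by ${}'$'' identities \eqnref{t4-goal1}--\eqnref{t4-goal2}. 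So the first move is to try to derive (B1) from (T1)--(T3); if that succeeds, we would be in the hypotheses of Theorem~\thmref{strange} once we also have \eqnref{str3}, which is just (T2) --- and Theorem~\thmref{strange} would hand us (I3) for free.

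First I would attempt to prove (B1), namely $(x\ld y)\rd z = x\ld(y\rd z)$, directly from (T1)--(T3). The left and right sides both unfold via (T3) into expressions one can rearrange with (T1). Concretely, rewrite $z$ or $x$ using the ${}'$-notation and the reconstruction identity $x\cdot y = x\rd y' = x'\ld y$ once it is available; the associativity-like content of (B1) should fall out of (T1) together with \eqnref{t2a}, \eqnref{same}. If (B1) proves elusive to get in full generality at this stage, the fallback is to prove (I3) by a direct chain of (T1)/(T2)/(T3)-rewrites in the spirit of the long computation in the proof of Theorem~\thmref{regular}: one establishes in sequence small facts like $x'\ld(x''\ld x) = x''$, $x'\ld x' = x\rd x$ (or its mirror), an expansion formula for $x''\ld y$, then $x''\ld x = x\ld x''$, then $x''' = x'$, and finally bootstraps to $x'' = x$. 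The identities \eqnref{t4-goal1}--\eqnref{t4-goal2} from Lemma~\lemref{joint} will be the workhorses for collapsing double-primes on $x\ld x$ and $x\rd x$.

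The step I expect to be the main obstacle is getting the ``$x''' = x'$'' reduction (or equivalently, controlling triple applications of ${}'$): in the Theorem~\thmref{regular} argument this required a delicate interplay of several intermediate identities (\eqnref{reg-30}, \eqnref{reg-34}, \eqnref{reg-45}, \eqnref{reg-49}, \eqnref{reg-54}) before $x'' = x$ finally dropped out, and here we have the weaker/differently-shaped hypothesis (T1) in place of the cleaner (B1)--(B2) package, so the bookkeeping of which side of a product each variable sits on is more fragile. The key to keeping it manageable is to systematically translate everything into the semigroup shorthand $xy = x'\ld y = x\rd y'$ as early as possible --- exactly as the paragraph after Theorem~\thmref{E-inversive} advises --- so that (T1) reads as a mild associativity-type law and the primes behave like a near-involution that one is trying to upgrade to an honest involution. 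Once (I3) is in hand, the remaining claims of Proposition~\prpref{main} (that (T4) holds) follow quickly: with $x'' = x$ one has $(x\rd y)' = (xy')' $, and (T4)'s assertion $(x\rd y)' = y\rd x = yx'$ becomes the Nagy-type antiautomorphism identity $(xy')' = yx'$, which should be extractable from (T1) rewritten in multiplicative form together with (I3).
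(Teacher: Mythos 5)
There is a genuine gap here: you have outlined two possible strategies but carried out neither, and the one you designate as primary rests on an unproven claim that is at least as hard as the lemma itself. Your main route is to derive (B1) from (T1)--(T3) and then feed (B1), (B2)$=$(T3), (B3)$=$\eqnref{t2a} and \eqnref{str3}$=$(T2) into Theorem \thmref{strange} to extract (I3). Logically that would work, but nothing in your sketch actually produces (B1): you say its ``associativity-like content should fall out of (T1)'', yet (T1) has the shape $(x\rd y)\ld z = y\rd(z\ld x)$ while (B1) has the shape $(x\ld y)\rd z = x\ld(y\rd z)$, and there is no visible one- or two-step rewrite between them. Worse, your suggested device --- passing early to the multiplicative shorthand $xy = x\rd y' = x'\ld y$ and treating (T1) as ``a mild associativity-type law'' --- presupposes exactly what (B1) is needed for: without (B1) the reconstructed product is not known to be associative (the paper's own verification of associativity in the Introduction is a one-line application of (B1)), so you cannot legitimately drop parentheses or shuffle primes in multiplicative notation at this stage. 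Your fallback (``mimic the long chain in Theorem \thmref{regular}'') inherits the same problem, since every displayed step of that chain invokes (B1).

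By contrast, the paper's proof of this lemma is a single short chain of equalities entirely in the division operations, never touching (B1): starting from $x'' = x'\ld(x'\rd x')$, expand the inner $x'$ via \eqnref{reconstruct}, apply (T1) once, use (T3) to turn $(x\rd x)\rd x'$ into $(x\rd x)'\ld x$, collapse the prime with \eqnref{t4-goal2}, apply (T2), then use (T3) and \eqnref{t4-goal1} again and finish with \eqnref{t2a}. The only prerequisites are the identities of Lemma \lemref{joint}, which you correctly identified as the workhorses; but identifying the tools is not the same as producing the derivation, and as written your argument does not establish (I3).
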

\begin{proof}
We compute
\begin{alignat*}{4}
x'' &\by{reconstruct} x'\ld (x'\rd x')
&&\by{reconstruct} x'\ld (\underbrace{x'\rd (x\ld (x\rd x))})
&&\byx{(T1)} x'\ld ((\underbrace{(x\rd x)\rd x'})\ld x) \\
&\byx{(T3)} x'\ld ((\underbrace{(x\rd x)'}\ld x)\ld x)
&&\by{t4-goal2} x'\ld ((\underbrace{(x\rd x)\ld x})\ld x)
&& \byx{(T2)} x'\ld (x\ld x) \\
&\byx{(T3)} x\rd (x\ld x)'
&&\by{t4-goal1} x\rd (x\ld x)
&&\by{t2a} x\,. \tag*{\qedhere}
\end{alignat*}
\end{proof}

The next lemma provides a number of handy ways of expressing the products
of two or three elements, and how the inversion relates with the two binary operations.

\begin{lemma}
Under the assumptions of Proposition \prpref{main} the following identities hold:
 \begin{align}
x \rd y &= (x\rd y) \rd (y\rd y)  \eqnlabel{step65} \\
x\rd y &= (x\rd x)\ld (x \rd y)  \eqnlabel{step36} \\
x\ld y &= ((y \rd y) \ld x)\ld y  \eqnlabel{step34} \\
x\ld y &= x' \rd y'  \qquad\text{and}\qquad  x' \ld y' = x\rd y  \eqnlabel{t4-goal4} \\
x \ld y'& = x' \rd y     \eqnlabel{t4-goal3} \\
x' \ld (y \rd z) &=x \rd (z \rd y)   \eqnlabel{step38} \\
x\ld (y\rd z)'&= x\ld (z\rd y)   \eqnlabel{step43} \\
(x\rd y)\ld z&=(y\rd x)' \rd z    \eqnlabel{step49}
\end{align}
\end{lemma}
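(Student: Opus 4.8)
The plan is to prove the eight identities roughly in the order \eqnref{t4-goal4}, \eqnref{t4-goal3}, then \eqnref{step65}, \eqnref{step36}, \eqnref{step34}, then \eqnref{step38}, \eqnref{step43}, and finally \eqnref{step49}, at each stage using freely the identities already proved together with (T1), (T2), (T3), the identity (I3) just derived, and the facts (B3) [which is \eqnref{t2a}], \eqnref{same}, \eqnref{t4-goal1}, \eqnref{t4-goal2} supplied by Lemma \lemref{joint}. The first step is immediate: substituting $y\mapsto y'$ into (T3) and using (I3) gives $x\rd y = x'\ld y'$, substituting $x\mapsto x'$ gives $x\ld y = x'\rd y'$ (together these are \eqnref{t4-goal4}), and substituting both gives \eqnref{t4-goal3}. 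From here ${}'$ interchanges $\ld$ and $\rd$ by priming both arguments, so the remaining identities come in $\ld\leftrightarrow\rd$ companion pairs and each computation need only be carried out once; I would also record at the outset the cheap ``weak idempotency'' facts $(x\rd x)\ld(x\rd x) = x\rd x$, from (T1) followed by (T2), and, dually, $(x\ld x)\rd(x\ld x) = x\ld x$.

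Next the three ``absorption'' identities. Each of \eqnref{step65}, \eqnref{step36}, \eqnref{step34} says that a one-sided idempotent term ($x\rd x$, $x\ld x$, or $(y\rd y)\ld(\cdot)$) is swallowed when composed on the appropriate side. Two applications of (T1) rewrite the right side of \eqnref{step34} as $(x\ld y)\rd(y\ld y)$, which by \eqnref{t4-goal4} is literally the assertion \eqnref{step65} over again, and \eqnref{step36} succumbs to a similar short chain of (T1) and (T2). So the real work is \eqnref{step65}, $x\rd y = (x\rd y)\rd(y\rd y)$: I would replace $y\rd y$ by $(y\rd y)'$ via \eqnref{t4-goal2} and apply (T3) to bring out $(x\rd y)'$, then expand $(x\rd y)'$ through \eqnref{reconstruct}/\eqnref{same} as $((x\rd y)\ld(x\rd y))\rd(x\rd y)$, apply (T1) to reach $(x\rd y)\rd\bigl((y\rd y)\ld((x\rd y)\ld(x\rd y))\bigr)$, and conclude by (B3) after verifying the small sublemma that the idempotent $y\rd y$ is absorbed into $(x\rd y)\ld(x\rd y)$ from the left --- itself a few more lines of (T1) and (T2).

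The identities \eqnref{step38} and \eqnref{step43} encode the ``anti-automorphism'' behaviour of ${}'$ relative to $\rd$. For \eqnref{step38}, $x'\ld(y\rd z) = x\rd(z\rd y)$, I would write $x' = (x\ld x)\rd x$, apply (T1), unfold the remaining divisions with further uses of (T1), and reassemble using the absorption identities of the previous paragraph to discard the spurious idempotent factors that appear; then \eqnref{step43} drops out of \eqnref{step38} by one application each of \eqnref{t4-goal3} and \eqnref{t4-goal4}. Finally, \eqnref{step49}: by (T1) its left side equals $y\rd(z\ld x)$, and feeding a suitable relabelling of this into \eqnref{step38} pulls ${}'$ out onto $y\rd x$, after which one absorption step produces the right side of \eqnref{step49}. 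This last identity is the whole point of the lemma: it is (T4), $(x\rd y)' = y\rd x$, carrying an extra trailing $z$, and the proof of Proposition \prpref{main} will strip that $z$ off by choosing it appropriately and invoking (I3) once more.

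The one genuine obstacle is that (B1) is not yet available, so associativity of the reconstructed product may not be used: every equality must be a bona fide rewrite by (T1), (T2), (T3), (I3), or an identity already proved, with no passage to semigroup notation. The difficulty therefore lies not in any single manipulation but in ordering the rewrites so as to avoid circularity --- most acutely for \eqnref{step65}, where the naive moves return the identity to itself and one must break the symmetry by introducing $(x\rd y)'$ in its reconstructed form and peeling off idempotent terms one at a time, exploiting that an outer $\rd$ (or $\ld$) absorbs the leftover discrepancies so that the intermediate expressions need not agree term by term.
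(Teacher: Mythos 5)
Your overall architecture is genuinely different from the paper's: you propose to prove \eqnref{step65} first, directly, and then derive \eqnref{step34} from it, whereas the paper proves \eqnref{step65} \emph{last}, as a consequence of \eqnref{step38} and \eqnref{step49}. Parts of your plan check out and are even cleaner than the paper's: your reduction of \eqnref{step34} to \eqnref{step65} via two applications of (T1) (giving $((y\rd y)\ld x)\ld y = (x\ld y)\rd(y\ld y)$) followed by \eqnref{t4-goal4} is correct, and your direct derivation of \eqnref{step65} — rewrite $(x\rd y)\rd(y\rd y)$ as $(x\rd y)'\ld(y\rd y)$ via \eqnref{t4-goal2} and (T3), expand $(x\rd y)'$ by \eqnref{reconstruct}, and apply (T1) — does reach $(x\rd y)\rd\bigl((y\rd y)\ld((x\rd y)\ld(x\rd y))\bigr)$ as you say. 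But there is a circularity in your ordering: the ``small sublemma'' that $y\rd y$ is absorbed on the left of $(x\rd y)\ld(x\rd y)$ is \emph{not} a few lines of (T1) and (T2). One application of (T1) gives $(x\rd y)\ld(x\rd y) = y\rd((x\rd y)\ld x)$, so the sublemma is exactly an instance of \eqnref{step36} — which you have scheduled \emph{after} \eqnref{step65}, and whose own proof requires \eqnref{t4-goal1}, (T3) and \eqnref{t4-goal4}, not merely (T1) and (T2). This is repairable by proving \eqnref{step36} first, but as written the plan does not close.

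The more serious gap is \eqnref{step38}, which is the crux of the whole lemma and which you dispose of in one sentence. Following your recipe, $x'\ld(y\rd z) = ((x\ld x)\rd x)\ld(y\rd z) \byx{(T1)} x\rd((y\rd z)\ld(x\ld x)) \byx{(T1)} x\rd(z\rd((x\ld x)\ld y))$, so you must show that the factor $(x\ld x)\ld$ applied to $y$, buried two levels deep inside $x\rd(z\rd(\cdot))$, can be discarded. None of the absorption identities \eqnref{step65}, \eqnref{step36}, \eqnref{step34} applies to this configuration directly: \eqnref{step34} removes $(y\rd y)\ld$ only when the result is immediately followed by $\ld y$, which is not the shape here. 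The paper bridges exactly this point by a nontrivial detour — starting instead from $x\rd(z\rd y) = x\rd(z'\ld y')$, applying (T1) to get $(y'\rd x)\ld z' = (y\ld x')\ld z'$, invoking \eqnref{step34} with the idempotent $x'\rd x'$, converting the resulting compound term with \eqnref{t4-goal3}, and only then applying (T1) twice to meet the expression above in the middle. Your phrase ``reassemble using the absorption identities of the previous paragraph to discard the spurious idempotent factors'' asserts precisely the step that needs an idea, and gives none. The sketch of \eqnref{step49} has the same character, though there the paper's route (two applications of \eqnref{step34} around one of \eqnref{step43}) is short enough that your version could likely be made to work.
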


\begin{proof}
We start by proving \eqnref{t4-goal3}.
\[
x'\rd y \byx{(I3)} x'\rd y'' \byx{(T3)} x''\ld y' \byx{(I3)} x\ld y'\,,
\]
and so, in particular, \eqnref{t4-goal4} follows:
\[
x'\rd y' \by{t4-goal3} x\ld y'' \byx{(I3)} x\ld y
\qquad\text{and}\qquad
x'\ld y' \by{t4-goal3} x''\rd y \byx{(I3)} x\rd y\,.
\]

Regarding \eqnref{step36},
\begin{alignat*}{2}
(x\rd x)\ld (x\rd y) &\byx{(T1)} x\rd (\underbrace{(x\rd y)\ld x})
&&\byx{(T1)} x\rd (y\rd \underbrace{(x\ld x)}) \\
&\by{t4-goal1} x\rd (\underbrace{y\rd (x\ld x)'})
&&\byx{(T3)} x\rd (y'\ld (x\ld x)) \\
&\byx{(T1)} (\underbrace{(x\ld x)\rd x})\ld y'
&&\by{reconstruct} x'\ld y' \\
&\by{t4-goal4} x\rd y\,. &&
\end{alignat*}

For \eqnref{step34} we have
\begin{alignat*}{4}
x\ld y &\by{t4-goal4} x'\rd y' &&\by{reconstruct} x'\rd (y\ld (y\rd y))
&&\byx{(T1)} ((y\rd y)\rd x')\ld y \\
&\byx{(T3)} ((y\rd y)'\ld x)\ld y &&\by{t4-goal2} ((y\rd y)\ld x)\ld y\,. &&
\end{alignat*}

Regarding \eqnref{step38},
\begin{alignat*}{4}
x\rd (z\rd y) &\by{t4-goal4} x\rd (z'\ld y')
&&\byx{(T1)} (\underbrace{y'\rd x})\ld z'
&&\by{t4-goal3} (y\ld x')\ld z' \\
&\by{step34} (\underbrace{((x'\rd x')\ld y)\ld x'})\ld z'
&&\by{t4-goal3} (((x'\rd x')\ld y)'\rd x)\ld z'
&&\byx{(T1)} x\rd (\underbrace{z'\ld ((x'\rd x')\ld y)'}) \\
&\by{t4-goal4} x\rd (z\rd ((\underbrace{x'\rd x'})\ld y))
&&\by{t4-goal4} x\rd (\underbrace{z\rd ((x\ld x)\ld y)})
&&\byx{(T1)} x\rd ((y\rd z)\ld (x\ld x)) \\
&\byx{(T1)} ((x\ld x)\rd x)\ld (y\rd z)
&&\by{reconstruct} x'\ld (y\rd z)\,. &&
\end{alignat*}
From this, we get \eqnref{step43}:
\[
x\ld (y\rd z)' \by{t4-goal3} x'\rd (y\rd z) \by{step38} x''\ld (z\rd y)
\byx{(I3)} x\ld (z\rd y)\,.
\]

For \eqnref{step49},
\[
(x\rd y)'\rd z \by{step34} (\underbrace{(z\rd z)\ld (x\rd y)'})\ld z
\by{step43} ((z\rd z)\ld (y\rd x))\ld z
\by{step34} (y\rd x)\ld z\,.
\]

Finally we prove \eqnref{step65}:
\begin{alignat*}{2}
(x\rd y)\rd (y\rd y) &\by{step38} (x\rd y)'\ld (y\ld y)
&&\by{step49} (y\rd x)\ld (y\rd y) \\
&\byx{(T1)} x\rd ((y\rd y)\ld y)
&&\byx{(T2)} x\rd y\,.  \tag*{\qedhere}
\end{alignat*}
\end{proof}

We have everything we need to prove Proposition \prpref{main}.
\begin{proof} (Proposition \prpref{main})

We claim that $(x \rd y)' = y \rd x$. In fact,
\begin{alignat*}{2}
(x\rd y)' &\by{reconstruct} (\underbrace{x\rd y})\ld ((x\rd y)\rd (x\rd y))
&&\by{step65} ((x\rd y)\rd (y\rd y))\ld ((x\rd y)\rd (x\rd y)) \\
&\byx{(T1)} (y\rd y)\rd (\underbrace{((x\rd y)\rd (x\rd y))\ld (x\rd y)})
&&\byx{(T2)} (y\rd y)\rd (x\rd y) \\
&\by{t4-goal4} (y\rd y)'\ld (x\rd y)'
&&\by{step43} (y\rd y)'\ld (y\rd x) \\
&\by{t4-goal2} (y\rd y)\ld (y\rd x)
&&\by{step36} y\rd x\,.
\end{alignat*}
Thus we have established (T4), completing the proof.
\end{proof}

\section{Open Problems}

We begin by restating a couple of problems from \cite{AM}. A set of identities in two binary operations is
said to be \emph{semi-separated} if at most two of the identities involve both operations.

\begin{problem}
Is there a semi-separated set of identities characterizing inverse semigroups in terms of
their division bimagmas?
\end{problem}

Of course, one can also ask this sort of question about the other varieties of unary semigroups that we considered in this paper.

$ \ $

A $3$-basis characterizing inverse semigroups in terms of their
division bimagmas was presented in \cite{AM}.

\begin{proposition}
\prplabel{tamura2}
Inverse semigroups are characterized in terms of their division bimagmas
by the independent identities \emph{(T1)}, \emph{(T2)} and
\[
(x \rd x) \ld (y \ld y) = y \ld (y \rd (x \rd x))\,. \tag{T6}
\]
\end{proposition}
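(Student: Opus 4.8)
The plan is to show that (T1), (T2), (T6) together are equivalent to (T1), (T2), (T3), (T5), after which the characterization of inverse semigroups follows from Proposition \prpref{tamura1}(2), and independence is handled by exhibiting three small models. Since we already know from Lemma \lemref{joint} that (T1) and (T2) alone guarantee that ${}'$ is well-defined and satisfies \eqnref{t2a} (that is, (B3)), \eqnref{t4-goal1} and \eqnref{t4-goal2}, the real content is to extract (T3) and (T5) from (T6) (and conversely). The first move I would make is to set $y = x$ in (T6): the left side becomes $(x\rd x)\ld (x\ld x)$, and the right side becomes $x\ld(x\rd(x\rd x))$. Using \eqnref{t4-goal2} to rewrite $x\rd x$ as $(x\rd x)'$ on the right, and \eqnref{t4-goal1}, \eqnref{reconstruct} to massage both sides, I expect this specialization to collapse to something like $(x\ld x)\rd(x\rd x) = (x\rd x)\ld(x\ld x)$ or another symmetric idempotent identity that will serve as a stepping stone. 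The general strategy throughout is the same one used repeatedly in \S\secref{Tamura}: peel off occurrences of $x\rd x$ and $x\ld x$ using (B3), (T2), and the fixed-point identities \eqnref{t4-goal1}--\eqnref{t4-goal2}, and shuttle parenthesizations across with (T1).

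To get (T3), i.e. $x\rd y' = x'\ld y$, I would aim to recognize both sides as instances of (T6) with suitable substitutions. Note that $x' = (x\ld x)\rd x$ and $y' = y\ld(y\rd y)$ by \eqnref{reconstruct}, so $x\rd y'$ and $x'\ld y$ are each built from subterms of the shape $u\rd u$, $u\ld u$; the hope is that after applying (T1) a couple of times to normalize, each side matches one side of (T6) for an appropriate choice of the two variables. Concretely I would try to prove an intermediate identity expressing $x'\ld y$ in the form $(x\rd x)\ld(\,\cdot\,)$ and $x\rd y'$ in the form $(\,\cdot\,)\ld(x\rd x)$ or similar, then close the loop with one application of (T6). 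Once (T3) is in hand, (I3) holds by the first auxiliary lemma in \S\secref{Tamura}, and then (T5), namely $(x\rd x)\rd(y\rd y) = (y\rd y)\rd(x\rd x)$, should follow by a short computation: rewrite $(x\rd x)\rd(y\rd y)$ using (T6) (with the roles of the two idempotent subterms chosen appropriately), use \eqnref{t4-goal2} and (B3)/(T2) to simplify, and compare with the same manipulation applied to $(y\rd y)\rd(x\rd x)$. For the converse direction, assuming (T1)--(T3) and (T5), Proposition \prpref{AM} gives (T4), and then (T6) should drop out by translating everything into the semigroup language $x\rd y = xy'$, $x\ld y = x'y$ with the natural inverse: both sides of (T6) become products of the idempotents $x'x$, $yy'$ and commutativity of idempotents in an inverse semigroup (equivalently (I8)) finishes it.

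The main obstacle I anticipate is the derivation of (T3) from (T1), (T2), (T6): unlike (T5), which is ``just'' a commutativity statement about idempotent-type elements and so is morally a consequence of the symmetry already visible in (T6), the identity (T3) genuinely ties together the two division operations via the reconstructed unary operation, and there is no a priori reason the single mixed identity (T6) should force it without a moderately clever sequence of substitutions. I would budget most of the effort there, probably proving two or three lemmas of the flavor of \eqnref{step34}--\eqnref{step38} (handy normal forms for $x\ld y$ and $x\rd y$ in terms of the idempotent subterms) as scaffolding, exactly as is done for Proposition \prpref{main}. The independence claim is routine: I would search for a three- or four-element bimagma satisfying (T1) and (T2) but not (T6), one satisfying (T1) and (T6) but not (T2), and one satisfying (T2) and (T6) but not (T1) — small models of this kind are already ubiquitous in \S\secref{new}, and (T1) being the associativity-like axiom is the one most easily broken by a non-associative table while keeping the other two, so that case is where I would look first.
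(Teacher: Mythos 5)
Your proposal is an outline, not a proof, and the step it leaves open is the entire mathematical content of the statement. The overall architecture --- show that $\{$(T1), (T2), (T6)$\}$ is equivalent to $\{$(T1),\ldots,(T5)$\}$ and then invoke Proposition \prpref{tamura1}(2) together with Proposition \prpref{AM} --- is the right one, and your treatment of the easy direction is correct: in an inverse semigroup both sides of (T6) reduce via (I4) to $xx'y'y$ and $y'yxx'$ respectively, so (T6) is (I8) in disguise; the independence models are likewise routine. But the forward direction hinges entirely on extracting (T3) from (T1), (T2), (T6), and there you offer only intentions (``the hope is that after applying (T1) a couple of times to normalize, each side matches one side of (T6)''; ``(T5) \dots should follow by a short computation''). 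No substitution is exhibited, no intermediate identity is derived, and you yourself concede there is ``no a priori reason'' the single mixed identity (T6) should force (T3). A proof whose only hard step is deferred to a hoped-for ``moderately clever sequence of substitutions'' has not been given.

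For context: the paper does not prove this proposition either. It cites a \textsc{Prover9}-generated proof left on the companion website of \cite{AM}, records that the authors tried and failed to find a version short enough to translate into humanly readable form, and explicitly poses the construction of such a proof as an open problem immediately after the statement. So the gap in your proposal is not a routine omission to be filled in later; it is precisely the part of the argument that is known to resist the kind of hand computation you are budgeting for. To turn this into a genuine proof you would need either to carry out the derivation of (T3) (and hence (I3) and (T5)) from (T1), (T2), (T6) in full --- which would in fact answer the paper's open problem --- or to cite the machine-verified equivalence, as the paper itself does.
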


The proof, found by \textsc{Prover9}, was left to the
companion website of \cite{AM}. That proof shows that the
axiom set $\{$(T1), (T2), (T6)$\}$ is equivalent to $\{$(T1),\ldots,(T5)$\}$.
We tried to find a shorter automated proof that would be easier to translate
into humanly readable form, but were unable to find anything reasonable.
As current research in automated deduction aims to find tools that provide mathematical insight into theorems and their proofs, we feel that this problem should be posed as a test question for those researchers.

\begin{problem}
Find a humanly understandable proof of Proposition \prpref{tamura2}.
Alternatively, find another $3$-basis for the division bimagmas of
inverse semigroups with a humanly understandable proof.
\end{problem}

As noted above, one of the goals of defining a class of algebras in terms of a class of algebras of a different type is that many properties/problems become obvious in the new setting, while difficult to spot in the first. 
For instance, Tamura proved that for the division bimagmas of regular involuted semigroups  the following are equivalent:
\begin{enumerate}
 \item $(S,\ld)$ is associative;
\item $(S,\rd)$ is associative;
\item $(S,\rd)=(S,\ld)$;
\end{enumerate}

In addition the class of regular involuted semigroups in which  $(S,\rd)=(S,\ld)$ is the class of commutative semigroups satisfying $x^3=x$. Therefore the following problems are natural.

\begin{problem}
For each class of semigroups defined in this paper, characterize when:
\begin{enumerate}
 \item $(S,\ld)$ [$(S,\rd)$] is associative [commutative, idempotent, with identity, with zero, nilpotent $(x \ld (x \ld\ldots (x \ld x))\ldots ))=0$ or its mirror image, E-unitary];
 \item $(S,\ld)$ and $(S,\rd)$ are equal [dual ($x\rd y = y \ld x$), isomorphic, anti-isomorphic];
\item $(S,\ld)$ distributes over $(S,\rd)$ (that is, for example, $(x \rd y) \ld z = (x \ld z ) \rd (y \ld z)$).
\end{enumerate}
\end{problem}

\begin{problem}
For the classes of semigroups discussed in this paper, characterize the natural partial order on a semigroup in the language of bimagmas. Similarly characterize Green's relations.
\end{problem}

\begin{acknowledgment}
We are pleased to acknowledge the assistance of the automated deduction tool
\textsc{Prover9} and the finite model builder \textsc{Mace4}, both developed by
McCune \cite{Prover9}.

The first author was partially supported by FCT and FEDER, Project POCTI-ISFL-1-143 of Centro de Algebra da Universidade de Lisboa, and by FCT and PIDDAC through the project PTDC/MAT/69514/2006.
\end{acknowledgment}

\end{document}